\newtheorem{thm}{Theorem}[section]
\newtheorem{prop}[thm]{Proposition}
\newtheorem{lem}[thm]{Lemma}
\theoremstyle{remark}
\newtheorem{rem}{Remark}[section]
\theoremstyle{remark}
\numberwithin{equation}{section}
\newcommand{\be}{\begin{equation}}  \newcommand{\ee}{\end{equation}}
\newcommand{\bea}{\begin{eqnarray*}}  \newcommand{\eea}{\end{eqnarray*}}
\begin{document}

\def\C{{\mathbb C}}
\def\R{{\mathbb R}}
\def\F{{\mathcal F}}
\def\T{{\mathcal T}}
\def\E{{\mathcal E}}
\def\Z{{\mathbb Z}}
\def\N{{\mathbb N}}

\def\L{{\Lambda}}
\def\M{{\mathcal M}}
\def\P{{\mathcal P}}
\def\D{{\mathcal D}}

\def\l{\lambda}
\def\sgn{\mbox{\rm sgn}}
\def\varen{\varepsilon}

\def\Re{\mbox{\rm Re\;}}

\title{Stochastic B\"acklund transformations}

\author{Neil O'Connell}
\address{Mathematics Institute, University of Warwick, Coventry CV4 7AL, UK and 
School of Mathematics, Trinity College Dublin, Dublin 2, Ireland}

\maketitle

\thispagestyle{empty}

\begin{abstract}
How does one introduce randomness into a classical dynamical system in order to produce something 
which is related to the `corresponding' quantum system?  We consider this question from 
a probabilistic point of view, in the context of some integrable Hamiltonian systems.
\end{abstract}


\section{Introduction}

Let $\mu\ge 1/2$ and consider the evolution $\dot x=\mu/x$ on the positive half-line.  
Then $\ddot x=-\mu^2/x^3$,
which is the equation of motion for the rational Calogero-Moser system with Hamiltonian
$$\frac12 p^2 - \frac{\mu^2}{2x^2}.$$
If we add noise, that is, if we consider the stochastic differential equation 
$$dX=dB+\frac{\mu}{X}dt,$$
where $B$ is a standard one-dimensional Brownian motion, then $X$
is a diffusion process on the positive half-line with infinitesimal generator 
$$L=\frac12\partial_x^2+\frac{\mu}{x}\partial_x.$$
The assumption $\mu\ge 1/2$ ensures that $X$ 
never hits zero.  The operator $L$ is related to the quantum Calogero-Moser Hamiltonian
$$H=\frac12\partial_x^2-\frac{\mu(\mu-1)}{2x^2},$$
via the ground state transform $$L=\psi(x)^{-1} H \psi(x),$$ where $\psi(x)=x^\mu$.
Ignoring for the moment the discrepancy between the coupling constants $\mu^2/2$ and 
$\mu(\mu-1)/2$, this provides a very simple example of a classical Hamiltonian system which 
has the property that if we add noise in a suitable way we obtain a diffusion process whose infinitesimal
generator is simply related to the corresponding quantum system.  

Appealing as it is, this example is quite unique and, in fact, somewhat misleading.  
The only constant of motion is the Hamiltonian itself and, as $\dot x=p$, the 
evolution $\dot x=\mu/x$ necessarily has $p^2/2-\mu^2/2x^2=0$. 
It is not clear how to extend this 
 construction---{\em together with its stochastic counterpart}---to allow for other values.  
In fact, there is a kind of `explanation' for this limitation which will come later.  

In the papers~\cite{noc,noc-toda} a certain probabilistic relation between the 
classical and quantum Toda lattice was observed.  This relation can be loosely described as follows:
starting with a particular construction of the classical flow on a given sub-Lagrangian manifold, {\em adding 
white noise to the constants of motion} yields a diffusion process whose infinitesimal generator is simply related 
to the corresponding quantum system.  

As we shall see, this relation extends naturally to some other integrable many-body systems,
specifically rational and hyperbolic Calogero-Moser systems.   The basic construction can be formulated
in terms of {\em kernel functions} and {\em B\"acklund transformations}.  
 For more background on the 
(interrelated) role of kernel functions and B\"acklund transformations in integrable systems see, for 
example,~\cite{hr1,ks,pg,sklyanin} and references therein.
In the present paper, to illustrate the main ideas, we will focus on rank-one (two particle) systems
although most of the constructions extend naturally to higher rank systems. 

The examples we consider are of course very special, having the property that there are kernel
functions which unite the classical and quantum systems through a kind of exact stationary
phase property.  Nevertheless, they should provide a useful benchmark for exploring 
similar relations for other Hamiltonian systems.

The outline of the paper is as follows.
In the next section, we will illustrate the basic construction of~\cite{noc,noc-toda} in the context of the rank one 
Toda lattice.  In this setting it is closely related to earlier results of Matsumoto and Yor~\cite{my} and Baudoin~\cite{baudoin}.
In Sections 3, 4 and 5, we give analogous constructions for the rational and hyperbolic Calogero-Moser systems.
As we shall see, the above example should in fact be seen as a particular degeneration of a more general construction
for the hyperbolic Calogero-Moser system, based on the kernel functions of Halln\"as and Ruijenaars~\cite{hr1,hr}.
In Section 6, we conclude with some remarks on how the solution to the Kardar-Parisi-Zhang equation can also be 
interpreted from this point of view.

\bigskip

\noindent {\em Notation.} The following notation will be used throughout.  If $E$ is a topological space, 
we denote by $B(E)$ the set of Borel measurable functions on $E$, by $C_b(E)$ the set of bounded 
continuous functions on $E$ and by $\P(E)$ the set of Borel probability measures on $E$.  
If $E$ is an open subset of $\R^n$, we denote by $C^2_c(E)$ the set of continuously twice
differentiable, compactly supported, functions on $E$.

\bigskip

\noindent {\em Acknowledgements.}  Thanks to Simon Ruijenaars for valuable 
discussions and comments on an earlier draft, and Mark Adler and Tom Kurtz for helpful correspondence.

\section{The Toda lattice}

For the rank-one Toda lattice we consider the kernel function
$$K(x,u)=\exp\left(-e^{-x}\cosh u\right),$$
and note that $K$ satisfies
\be\label{bac3}
(\partial_x\ln K)^2-(\partial_u\ln K)^2=e^{-2x},
\ee
and
\be\label{bac4}
\partial_x^2\ln K-\partial_u^2\ln K=0.
\ee
The corresponding B\"acklund transformation
\be\label{bt-t1}
\dot u = -\partial_u \ln K = e^{-x} \sinh u,\qquad \dot x =\partial_x\ln K = e^{-x} \cosh u
\ee
has the property that, if \eqref{bt-t1} holds, then
$x$ satisfies the equations of motion of the Toda system with Hamiltonian $$\frac12 p^2-\frac12 e^{-2x},$$
and $\dot u=\l$ is a conserved quantity for the coupled system.
Indeed, differentiating \eqref{bac3} with respect to $x$ yields
\be\label{bac1}
\ddot x = \partial_x^2\ln K\ \partial_x\ln K-\partial_u\partial_x\ln K\ \partial_u\ln K  = -e^{-2x},
\ee
and differentiating \eqref{bac3} with respect to $u$ gives
\be\label{bac2}
\ddot u = \partial_u^2\ln K\ \partial_u\ln K-\partial_x\partial_u\ln K\ \partial_x\ln K  =  0.
\ee
It also follows from \eqref{bac3} that $\l$ is an eigenvalue of the Lax matrix 
$$\begin{pmatrix} p & e^{-x} \\ -e^{-x} & -p \end{pmatrix} .$$

Now the equation $\dot u=\l$ is equivalent to the critical point equation 
$\partial_u \ln K_\l=0$, where $K_\l=e^{\l u} K$.  Using this equation,
namely
\be\label{cpt}
\sinh u = \l e^x,
\ee
we can rewrite the evolution equations \eqref{bt-t1} as
\be\label{eet}
\dot u =  \l,\qquad \dot x = \l + e^{-u-x} = (\partial_x+\partial_u)\ln K_\l .
\ee
We note that \eqref{cpt} has a unique solution $u_\l(x)=\sinh^{-1}(\l e^x)$ for any $\l,x\in\R$.
The relation \eqref{cpt} is stable under the new evolution equations \eqref{eet},
and is now required to be in force in order to guarantee that $(x,p)$ evolves according to 
the Toda flow on the iso-spectral manifold corresponding to $\l$.  Given any $\l\in\R$,
the evolution equations \eqref{eet} are well-posed on the corresponding iso-spectral manifold
(defined in these coordinates by the relation \eqref{cpt})
in the sense that they admit a unique semi-global solution.  For any $\l\in\R$ and initial
condition $x(0)=x_0$, the solution is given explicitly for all $t\ge 0$ by 
$$u(t)=u_\l(x_0)+\l t,\qquad
x(t)=\begin{cases} \ln\big(\frac1\l\sinh u(t)\big) & \l\ne 0\\ \ln\big(e^{x_0}+t\big) & \l=0.\end{cases}$$
The evolution equations \eqref{eet} provide the correct framework into which we can introduce noise with the desired outcome.  

Let $H=(\partial_x^2-e^{-2x})/2$, and write $H_\l=H-\l^2/2$.
Combining \eqref{bac3} and \eqref{bac4} gives the intertwining relation
\be\label{int1-t}
H_\l K_\l= \big(\frac12\partial_u^2-\l\partial_u\big) K_\l .
\ee 
It follows, using the Leibnitz rule, that
$$\psi_\l(x)=\int_{-\infty}^\infty K_\l(x,u) du$$
is an eigenfunction of $H$ with eigenvalue $\l^2/2$. 
We note that $\psi_\l(x)=2K_\l(e^{-x})$, where $K_\nu(z)$ is the modified Bessel function of the second kind, 
also known as Macdonald's function.  

\begin{rem}
The intertwining relation \eqref{int1-t} and associated integral formula 
for the eigenfunctions can be seen as a special case of those obtained by Gerasimov, Kharchev, Lebedev
and Oblezin \cite{gklo} for the $n$-particle open Toda chain.  The above B\"acklund transformation is a special 
case of the one given by Wojciechowski~\cite{w} which, 
as remarked in that paper, is closely related to a construction of Kac and Van Moerbeke~\cite{kvm}
for the periodic Toda chain.  It can also be seen as a particular degeneration of the B\"acklund transformation
for the infinite particle system given in Toda's monograph~\cite{toda}.
\end{rem}

Consider the integral operator defined, for suitable $f:\R^2\to\R$, by
$$\tilde K_\l f(x) = \int_{-\infty}^\infty K_\l(x,u) f(x,u) du,$$
and the differential operator, defined on $\D(A_\l)=C^2_c(\R^2)$, by
$$A_\l=\frac12\partial_x^2+\frac12\partial_u^2+\partial_x\partial_u+\l\partial_u+\left(\l+e^{-u-x}\right) \partial_x.$$
\begin{prop}\label{p-int-t} For $f\in \D(A_\l)$,
\be\label{int-t}
H_\l \tilde K_\l f= \tilde K_\l A_\l f.
\ee 
\end{prop}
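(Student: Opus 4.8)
The plan is to compute $H_\l\tilde K_\l f$ by differentiating under the integral sign and then to transfer every derivative onto $f$ using the two relations already established for the kernel: the second-order intertwining \eqref{int1-t} and the first-order B\"acklund relation $(\partial_x+\partial_u)\ln K_\l=\l+e^{-u-x}$ recorded in \eqref{eet}. Since $f\in C^2_c(\R^2)$ and $K_\l(x,u)=\exp(\l u-e^{-x}\cosh u)$ together with its $x$-derivatives decays rapidly in $u$, differentiation under the integral and integration by parts in $u$ are all legitimate, and every boundary term in $u$ vanishes by compact support of $f$.

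First I would apply $\partial_x$ and $\partial_x^2$ to $\tilde K_\l f(x)=\int K_\l f\,du$ by the Leibniz rule, obtaining
$$\partial_x^2\tilde K_\l f=\int(\partial_x^2 K_\l)f\,du+2\int(\partial_x K_\l)(\partial_x f)\,du+\int K_\l(\partial_x^2 f)\,du.$$
Multiplying by $\frac12$ and subtracting $(\frac12 e^{-2x}+\frac{\l^2}{2})\int K_\l f\,du$ sorts the expression into three pieces according to how many derivatives land on $f$. The piece carrying no derivative of $f$ is exactly $\int(H_\l K_\l)f\,du$, into which I would substitute \eqref{int1-t} to replace $H_\l K_\l$ by $(\frac12\partial_u^2-\l\partial_u)K_\l$; two integrations by parts in $u$ in the first term and one in the second then move these $u$-derivatives onto $f$, yielding $\frac12\int K_\l(\partial_u^2 f)\,du+\l\int K_\l(\partial_u f)\,du$. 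These reproduce the $\frac12\partial_u^2 f$ and $\l\partial_u f$ terms of $A_\l f$, while the untouched $\frac12\int K_\l(\partial_x^2 f)\,du$ reproduces the $\frac12\partial_x^2 f$ term.

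The remaining and decisive step is to reconcile the cross term $\int(\partial_x K_\l)(\partial_x f)\,du$ with the two outstanding terms of $A_\l f$, namely $\partial_x\partial_u f$ and $(\l+e^{-u-x})\partial_x f$. Integrating $\int K_\l(\partial_x\partial_u f)\,du$ by parts in $u$ converts it into $-\int(\partial_u K_\l)(\partial_x f)\,du$, so the outstanding part of $\tilde K_\l A_\l f$ becomes $\int[-\partial_u K_\l+(\l+e^{-u-x})K_\l](\partial_x f)\,du$. This coincides with $\int(\partial_x K_\l)(\partial_x f)\,du$ precisely when $(\partial_x+\partial_u)\ln K_\l=\l+e^{-u-x}$, which is exactly the first-order relation \eqref{eet}.

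I expect the only genuine obstacle to be the careful bookkeeping of the several integration-by-parts steps and the justification of differentiating under the integral; both are routine given the compact support of $f$ and the rapid decay of $K_\l$. The conceptual content is simply that the second-order intertwining \eqref{int1-t} disposes of all terms in which no derivative strikes $f$, whereas the first-order B\"acklund relation \eqref{eet} is what matches the first-order-in-$f$ cross term, so the two properties of the kernel together force the stated identity $H_\l\tilde K_\l f=\tilde K_\l A_\l f$.
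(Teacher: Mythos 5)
Your proposal is correct and follows essentially the same route as the paper's proof: expand $H_\l\tilde K_\l f$ by the Leibnitz rule, dispose of the zeroth-order-in-$f$ piece via the intertwining relation \eqref{int1-t} and integration by parts in $u$, and match the cross term using $(\partial_x+\partial_u)\ln K_\l=\l+e^{-u-x}$. The only cosmetic difference is that you reconcile the first-order terms by comparing both sides, whereas the paper pushes the computation forward in one chain of equalities; the substance is identical.
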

\begin{proof}  This follows from the intertwining relation \eqref{int1-t}.  Recall that
$$(\partial_x+\partial_u) \ln K_\l=\l+e^{-u-x}.$$
By Leibnitz' rule and integration by parts,
\begin{align*}
H_\l \tilde K_\l f (x) = H_\l \int_{-\infty}^\infty &K_\l f du \\
= \int_{-\infty}^\infty  \Big[ (H_\l K_\l) f + & (\partial_x K_\l)\partial_x f+ K_\l \tfrac12 \partial_x^2 f\Big] du \\
=  \int_{-\infty}^\infty  \Big[ (\tfrac12 \partial_u^2 K_\l-\l\partial_u K_\l) f &+K_\l (\partial_x \ln K_\l)\partial_x f+ K_\l\tfrac12 \partial_x^2 f\Big] du \\
= \int_{-\infty}^\infty\Big[ K_\l (\tfrac12 \partial_u^2 f+\l\partial_u f)  +&K_\l ((\partial_x+\partial_u) \ln K_\l)\partial_x f
-(\partial_u K_\l)\partial_x f + K_\l\tfrac12 \partial_x^2 f\Big] du \\
= \int_{-\infty}^\infty\Big[ K_\l (\tfrac12 \partial_u^2 f+\l\partial_u f) +&K_\l ((\partial_x+\partial_u) \ln K_\l)\partial_x f
+  K_\l\partial_u\partial_x f + K_\l\tfrac12 \partial_x^2 f\Big] du \\
= K_\l A_\l f,&
\end{align*}
as required.
\end{proof}

Now, if $\l\in\R$, the intertwining relation \eqref{int-t} has a {\em probabilistic} meaning, which we will
soon make precise.  It implies that there is a two-dimensional diffusion process, characterized
by the differential operator $A_\l$, which has the property that,
with particular initial condition specified by the kernel $K_\l$, its projection onto the $x$-coordinate
 is a diffusion process in $\R$
which is characterised by a renormalisation of the operator $H_\l$.  Moreover, the two-dimensional 
diffusion process charaterized by $A_\l$ is precisely the B\"acklund transformation, in the form of \eqref{eet},
with white noise added to the constant of motion $\l$.  We will now make this statement precise.

Suppose $\l\in\R$, let $B$ be a standard one-dimensional Brownian motion and consider the coupled
stochastic differential equations obtained by adding white noise to $\l$ in \eqref{eet}, that is
\be\label{sdet}
dU=dB+\l dt,\qquad dX=dU +e^{-U-X} dt.
\ee
This can be solved explictly: for any initial condition $(X_0,U_0)$, 
\be\label{sol}
U_t= U_0+B_t+\l t,\qquad X_t=U_t+\ln\left( e^{X_0-U_0}+\int_0^t e^{-2U_s} ds \right).
\ee
As the function $(x,u)\mapsto (\l+e^{-u-x},\ \l)$ is locally Lipschitz, it follows that,
for any initial condition, \eqref{sol} is the unique solution to \eqref{sdet}.
Moreover, it is a diffusion process in $\R^2$ with infinitesimal generator $A_\l$
and the martingale problem for $(A_\l,\nu)$ is well-posed for any $\nu\in\P(\R^2)$.
For more background on the relation between stochastic differential equations
and martingale problems see, for example,~\cite{ek,k11}.

Next we consider the diffusion process on $\R$ with infinitesimal generator
$$L_\l=\psi_\l(x)^{-1} H_\l \psi_\l(x)=\frac12\partial_x^2+\partial_x\ln \psi_\l(x) \cdot \partial_x.$$
This process was introduced by Matsumoto and Yor~\cite{my}.
Observe that the drift
$$\partial_x \ln \psi_\l(x)=e^{-x} \frac{K_{\l+1}(e^{-x})}{K_\l(e^{-x})},$$
is locally Lipschitz, behaves like $e^{-x}$ at $-\infty$ and vanishes at $+\infty$.  It follows that $-\infty$
is an entrance boundary, $+\infty$ is a natural boundary and, for any $\rho\in\P(\R)$, the martingale 
problem for $(L_\l,\rho)$, with $\D(L_\l)=C^2_c(\R)$, is well-posed.

Using the theory of Markov functions (see Appendix A), the 
intertwining relation \eqref{int-t} yields the following result
of Matsumoto and Yor~\cite{my} and Baudoin~\cite{baudoin}.

\begin{thm}\label{my}
Let $\rho\in\P(\R)$ and $\nu= \rho(dx) \nu_x(du)\in\P(\R^2)$,
where $\nu_x(du)= \psi_\l(x)^{-1}K_\l(x,u)du$.
Let $(X,U)$ be a diffusion process in $\R^2$ with initial condition $\nu$ and infinitesimal generator $A_\l$.
Then $X$ is a diffusion process in $\R$ with infinitesimal generator $L_\l$.
Moreover, for each $t\ge 0$ and $g\in B(\R)$, 
$$E[g(U_t)|\ X_s,\ 0\le s\le t]=\int_{-\infty}^\infty g(u) \nu_{X_t}(du) ,$$
almost surely.
\end{thm}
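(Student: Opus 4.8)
The plan is to recognise the statement as an instance of the theory of Markov functions (Appendix A), applied to the two-dimensional diffusion $Z=(X,U)$ with generator $A_\l$, the projection $\phi(x,u)=x$, and the transition kernel $\L$ from $\R$ to $\R^2$ defined by
$$\L f(x)=\int_{-\infty}^\infty \nu_x(du)\, f(x,u)=\psi_\l(x)^{-1}\tilde K_\l f(x).$$
The first thing to observe is that, for each $x$, the measure $\nu_x$ is a probability measure on $\R^2$ supported on the fibre $\phi^{-1}(x)=\{x\}\times\R$; thus $\L$ is a genuine Markov kernel that inverts the projection in the sense required by the theory, and the initial law $\nu=\rho(dx)\nu_x(du)$ is exactly $\rho\L$. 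Writing $\bar g(x,u)=g(u)$, the identity $\int g(u)\,\nu_{X_t}(du)=\L\bar g(X_t)$ recasts the asserted conditional expectation formula as the filtering identity supplied by the abstract theorem.

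The algebraic heart of the argument is the operator intertwining $\L A_\l=L_\l\L$, which I would deduce directly from Proposition~\ref{p-int-t}. For $f\in C^2_c(\R^2)$, using \eqref{int-t} and the ground-state definition $L_\l=\psi_\l^{-1}H_\l\psi_\l$,
$$\L A_\l f=\psi_\l^{-1}\tilde K_\l A_\l f=\psi_\l^{-1}H_\l\tilde K_\l f=\big(\psi_\l^{-1}H_\l\psi_\l\big)\big(\psi_\l^{-1}\tilde K_\l f\big)=L_\l\L f.$$
This is precisely the intertwining of generators that, together with the support property of $\L$, drives the conclusion: heuristically, the $A_\l$-dynamics, when averaged against $\nu_x$ over the fibre, projects to the $L_\l$-dynamics on the $x$-line.

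With the intertwining in hand I would invoke the version of the Markov-functions theorem recorded in Appendix A. Its hypotheses are met here: the martingale problem for $(A_\l,\nu)$ is well-posed for every $\nu\in\P(\R^2)$ (the system \eqref{sdet} is solved explicitly in \eqref{sol} and its coefficients are locally Lipschitz), the martingale problem for $(L_\l,\rho)$ is well-posed for every $\rho\in\P(\R)$ (established through the boundary classification of $L_\l$, with $-\infty$ an entrance and $+\infty$ a natural boundary), and $C^2_c(\R^2)$, $C^2_c(\R)$ serve as the relevant cores. The theorem then yields simultaneously that $X=\phi(Z)$ is a diffusion with generator $L_\l$ started from $\rho$, and that the conditional law of $Z_t$ given $\sigma(X_s:0\le s\le t)$ equals $\nu_{X_t}$, which is the stated formula.

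The step I expect to be the main obstacle is the domain bookkeeping needed to pass from the intertwining, valid on the compactly supported class $C^2_c(\R^2)$, to the relation between the full semigroups required by the abstract theorem. The difficulty is that $\L f=\psi_\l^{-1}\tilde K_\l f$ need not be compactly supported, nor a priori lie in a core of $L_\l$, so one must control the behaviour of $\psi_\l$ (equivalently of the Macdonald function) at both ends of the line to ensure that $\L$ maps a suitable core of $L_\l$ into functions to which \eqref{int-t} applies. I would resolve this using the explicit asymptotics of $\psi_\l$---the drift $\partial_x\ln\psi_\l$ behaves like $e^{-x}$ at $-\infty$ and vanishes at $+\infty$---together with the well-posedness of both martingale problems, which allow the core condition of Appendix A to be verified and the two semigroups to be intertwined.
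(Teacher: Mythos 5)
Your proposal is correct and follows essentially the same route as the paper: both invoke the Kurtz Markov-functions theorem (Theorem~\ref{mf}) with the projection $\gamma(x,u)=x$ and the kernel $\L f(x)=\psi_\l(x)^{-1}\tilde K_\l f(x)$, derive $L_\l\L f=\L A_\l f$ from Proposition~\ref{p-int-t}, and verify that $C^2_c(\R^2)$ is closed under multiplication, separates points and is convergence determining. The domain issue you flag as the main obstacle is handled in the paper exactly as you suggest, by enlarging the domain to $\D(L_\l)=\L(\D(A_\l))\cup C^2_c(\R)$ and noting that well-posedness persists by It\^o's lemma together with the intertwining relation.
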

\begin{proof}  This follows from the intertwining relation \eqref{int-t}, using Theorem~\ref{mf}.  
The map $\gamma:\R^2\to \R$ defined by $\gamma(x,u)=x$ 
is continuous and the Markov transition kernel $\L$ from $\R$ to $\R^2$ defined by
$$\L f(x)= \int_{-\infty}^\infty \nu_x(du) f(x,u),\qquad f\in B(\R)$$
satifies $\L (g\circ\gamma)=g$ for $g\in B(\R)$.
Moreover, by  \eqref{int-t}, 
\be\label{int-t-L}
L_\l \L f=\L A_\l f,\qquad f\in\D(A_\l).
\ee
Now, $\D(A_\l)=C^2_c(\R^2)$ is closed under multiplication, separates points and is convergence 
determining. Finally, by It\^o's lemma and the intertwining relation \eqref{int-t-L}, the martingale problem for 
$(L_\l,\rho)$, now taking $\D(L_\l)=\L (\D(A_\l))\cup C^2_c(\R)$, is also well-posed, so we are done.
\end{proof}

\begin{figure}\label{fig}
\begin{center}
\includegraphics[scale=0.6,angle=90]{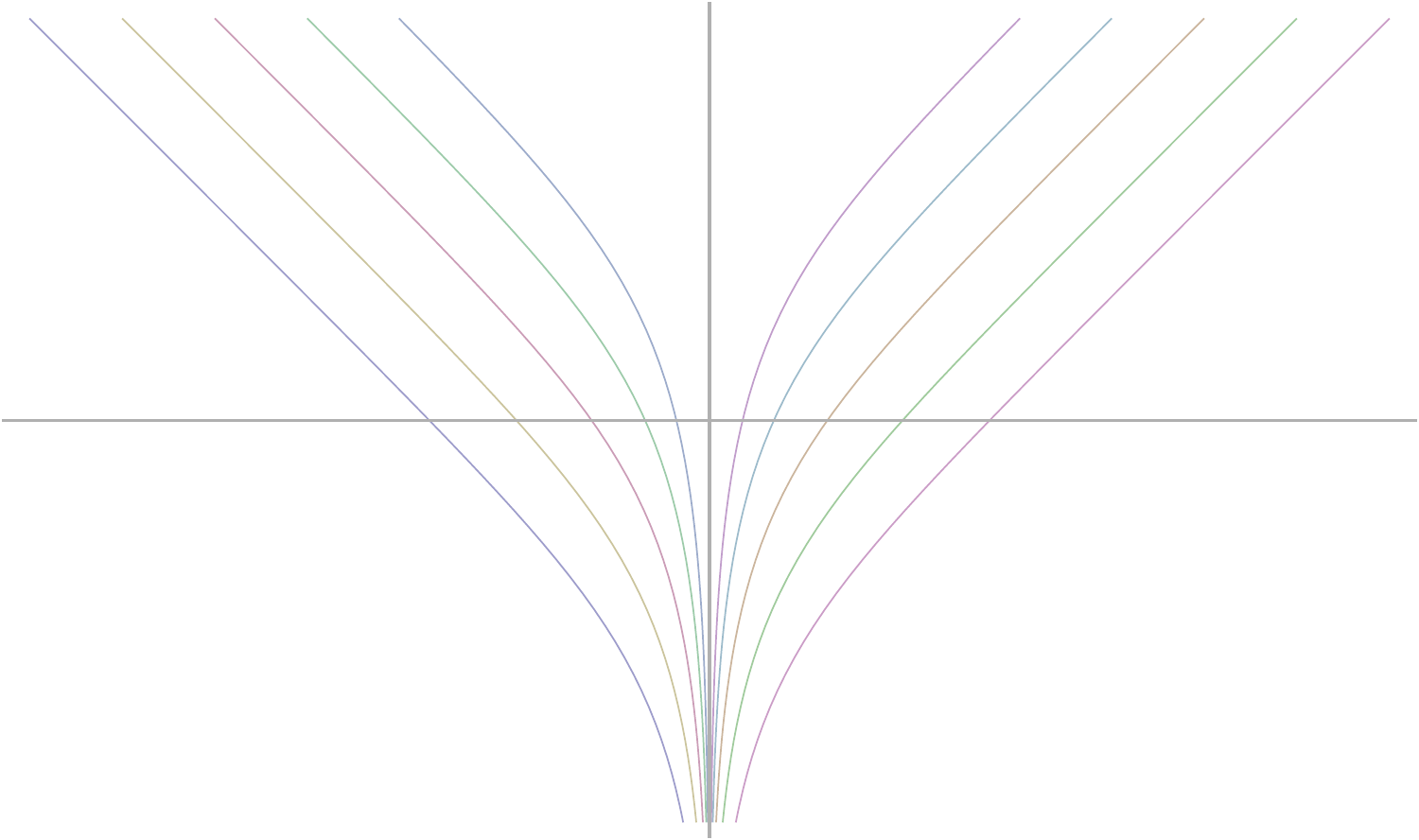}
\caption{The Toda flow in $u$ (horizontal) and $x$ (vertical) coordinates}
\end{center}
\end{figure}

To summarise, for any given value of the constant of motion $\l=\dot u\in\R$, 
the classical flow in $\R^2$ is along the curve $\sinh u=\l e^x$ (see Figure 1),
according to the evolution equations
\be\label{1}
\dot u =  \l,\qquad \dot x = \dot u + e^{-u-x},
\ee
and the $x$-coordinate satisfies the equation of motion $\ddot x=-e^{-2x}$.  
If we add noise to the constant of motion $\l$, 
then the evolution is described by the stochastic differential equations
\be\label{2}
dU=dB+\l dt,\qquad dX=dU + e^{-U-X} dt
\ee
and, for appropriate (random) initial conditions, the $u$-coordinate evolves as a Brownian
motion with drift $\l$ and the $x$-coordinate evolves as a diffusion process in $\R$ 
with infinitesimal generator $L_\l$.
As \eqref{1} is essentially a rewriting of the B\"acklund transformation~\eqref{bt-t1}, 
and in view of Theorem~\ref{my}, it seems natural to refer to \eqref{2} as a 
{\em stochastic B\"acklund transformation}, hence the title of this paper.  

To relate this to the semi-classical limit, consider the Hamiltonian
$$H^{(\epsilon)} =\frac{\epsilon}2\partial_x^2-\frac1\epsilon e^{-2x}.$$  Now the eigenfunctions are given by
$$\psi^{(\epsilon)}_\l(x)=\int_{-\infty}^\infty K_\l(x,u)^{1/\epsilon} du,$$
and Theorem~\ref{my} can be restated as follows.  Let $X_0=x$ and choose $U_0$
at random according to the probability distribution 
$$\nu^{(\epsilon)}_x=\psi^{(\epsilon)}_\l(x)^{-1} K_\l(x,u)^{1/\epsilon} du.$$
Let $(X,U)$ be the unique solution to the SDE
$$dU=\sqrt{\epsilon} dB+\l dt,\qquad dX=dU + e^{-U-X} dt,$$
with this initial condition.  Then $X$ is a diffusion process in $\R$ with 
infinitesimal generator given by
$$\frac\epsilon2\partial_x^2+\epsilon\ \partial_x\ln \psi^{(\epsilon)}_\l(x) \cdot \partial_x.$$
As $\epsilon\to 0$, the evolution of $(X,U)$ reduces to the evolution equations \eqref{eet}
and the initial distribution of $U_0$ concentrates on the unique solution $u_\l(x)$ to the
critical point equation $\partial_u\ln K_\l=0$.  On the other hand, one might expect
$$\epsilon\ \partial_x\ln \psi^{(\epsilon)}_\l(x)\to \partial_x\big[ \ln K_\l(x,u_\l(x))\big],$$
as is indeed the case, and the evolution of $X$ reduces to the gradient flow
$$\dot x = \partial_x\big[ \ln K_\l(x,u_\l(x))\big],$$
 which is equivalent to \eqref{eet} thanks to the remarkable identity
$$ \partial_x\big[ \ln K_\l(x,u_\l(x))\big]=\big[ \partial_x \ln K_\l](x,u_\l(x)).$$

\section{Rational Calogero-Moser system}\label{sec-cg}

In this section, we formulate an analogous construction for the one-dimensional 
rational Calogero-Moser system.  Consider the kernel function
$$K(x,u)= \frac{x^2-u^2}{x},\qquad |u|\le x,$$
and note that $K$ satisfies
\be\label{bac3-rcm}
(\partial_x\ln K)^2-(\partial_u\ln K)^2=1/x^2
\ee
and
\be\label{bac4-rcm}
\partial_x^2\ln K-\partial_u^2\ln K=1/x^2.
\ee
The corresponding B\"acklund transformation
\be\label{bt-rcm}
\dot u =  \frac1{x-u}-\frac1{x+u} = -\partial_u \ln K,\qquad \dot x = \frac1{x-u}+\frac1{x+u}-\frac1x=\partial_x\ln K
\ee
has the property that, if \eqref{bt-rcm} holds, then $x$ satisfies the equations of motion of the
rational Calogero-Moser system with Hamiltonian 
$$\frac12 p^2-\frac1{2x^2},$$ 
and $\dot u=\l$ is a conserved quantity for the coupled system.
Indeed, as in the Toda case, differentiating \eqref{bac3-rcm} with respect to $x$ and $u$ yields,
respectively,
\be\label{bac1-rcm}
\ddot x = \partial_x^2\ln K\ \partial_x\ln K-\partial_u\partial_x\ln K\ \partial_u\ln K  = -1/x^3,
\ee
and
\be\label{bac2-rcm}
\ddot u = \partial_u^2\ln K\ \partial_u\ln K-\partial_x\partial_u\ln K\ \partial_x\ln K  =  0.
\ee
It also follows from \eqref{bac3-rcm} that $\l$ is an eigenvalue of the Lax matrix 
$$\begin{pmatrix} p & 1/x \\ -1/x & -p \end{pmatrix} .$$

As before, $\dot u=\l$ is equivalent to the critical point equation 
$\partial_u \ln K_\l=0$, where $K_\l=e^{\l u} K$.  Using this equation,
namely
\be\label{cp}
2u=\l(x^2-u^2),
\ee
we can rewrite the evolution equations as
\be\label{ee}
\dot u =  \l,\qquad \dot x = \l + \frac{2}{x+u}-\frac1x= (\partial_x+\partial_u)\ln K_\l .
\ee
The critical point equation \eqref{cp} has a unique solution $u_\l(x)\in(-x,x)$ for any $\l\in\R$ and $x>0$.
The relation \eqref{cp} is stable under the new evolution equations \eqref{ee},
and is now required to be in force in order to guarantee that $(x,p)$ evolves according to 
the rational Calogero-Moser flow on the iso-spectral manifold corresponding to $\l$.  
Given any $\l\in\R$,
the evolution equations \eqref{ee} are well-posed on the corresponding iso-spectral manifold
(defined in these coordinates by the relation \eqref{cp})
in the sense that they admit a unique semi-global solution.  For any $\l\in\R$ and initial
condition $x(0)=x_0>0$, the solution is given explicitly for all $t\ge 0$ by 
$$u(t)=u_\l(x_0)+\l t,\qquad
x(t)=\begin{cases} \sqrt{u(t)^2+2u(t)/\l} & \l\ne 0\\ \sqrt{x_0^2+2t} & \l=0.\end{cases}$$
As in the Toda case,
the evolution equations \eqref{ee} provide the correct framework into which we can introduce noise with the desired outcome.  

Let
$$H=\frac12\partial_x^2-\frac1{x^2},$$
and write $H_\l=H-\l^2/2$.
Combining \eqref{bac3-rcm} and \eqref{bac4-rcm} gives the intertwining relation 
\be\label{int}
H_\l K_\l= \big( \frac12\partial_u^2-\l\partial_u\big) K_\l .
\ee 
It follows that
$$\psi_\l(x)=\int_{-x}^x K_\l(x,u) du$$
is an eigenfunction of $H$ with eigenvalue $\l^2/2$.  To see this, first note that 
$$\partial_x K_\l = e^{\l u}\left( 1+\frac{u^2}{x^2}\right),\qquad \partial_u K_\l = -\frac{2u}{x} e^{\l u}+\l K_\l ,$$
and
$$K_\l(x,x)=K_\l(x,-x)=0.$$
By the Leibnitz rule,
$$\partial_x \psi_\l = \int_{-x}^x \partial_x K_\l du +K_\l(x,x)+K_\l(x,-x)=\int_{-x}^x \partial_x K_\l du,$$
and so
\bea
\partial_x^2 \psi_\l &=& \int_{-x}^x \partial_x^2 K_\l du +\partial_x K_\l(x,x)+\partial_x K_\l(x,-x)\\
&=&\int_{-x}^x \partial_x^2 K_\l du+2(e^{\l x}+e^{-\l x}).
\eea
It follows, using \eqref{int}, that
\bea
H_\l \psi_\l &=& \int_{-x}^x H_\l K_\l du + (e^{\l x}+e^{-\l x})\\
&=&  \int_{-x}^x \big( \frac12\partial_u^2 - \l\partial_u\big) K_\l du + (e^{\l x}+e^{-\l x})\\
&=& \big(\frac12 \partial_u -\l) K_\l\Big\vert^{u=x}_{u=-x} + (e^{\l x}+e^{-\l x})=0,
\eea
as required.  

\begin{rem}
The above integral representation is a special case of the {\em Dixon-Anderson formula}~\cite{for}.
The corresponding B\"acklund transformation is a special case of the one introduced in~\cite{cc1}, 
see also \cite{cc,w}.
\end{rem}

We note that $\psi_0(x)=2x^2/3$, $\psi_{-\l}(x)=\psi_\l(x)$ and, for $\l>0$,
$$\psi_\l(x)= \l^{-3/2} \sqrt{2\pi x} \ I_{3/2}(\l x),$$ where $I_\nu(z)$
is the modified Bessel function of the first kind.

Let $$D=\{(x,u)\in\R^2:\ |u|< x\}.$$ 
Consider the integral operator defined, for suitable $f:D\to\R$, by
$$\tilde K_\l f(x) = \int_{-x}^x K_\l(x,u) f(x,u) du,$$
and the differential operator, defined on $\D(A_\l)=C^2_c(D)$, by
$$A_\l=\frac12\partial_x^2+\frac12\partial_u^2+\partial_x\partial_u+\l\partial_u+\left(\l+\frac{2}{x+u}-\frac1x\right) \partial_x.$$
\begin{prop}  
For $f\in \D(A_\l)$,
\be\label{int2}
H_\l \tilde K_\l f= \tilde K_\l A_\l f.
\ee 
\end{prop}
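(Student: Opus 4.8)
The plan is to follow exactly the computation used for the Toda case in Proposition~\ref{p-int-t}, since the structure of the two problems is identical: the only differences are the explicit form of $\ln K_\l$, the domain of integration $(-x,x)$ in place of $(-\infty,\infty)$, and the extra potential term $1/x^2$ hidden in the passage from \eqref{bac3-rcm}--\eqref{bac4-rcm} to the intertwining relation \eqref{int}. First I would apply $H_\l$ under the integral sign, using Leibnitz' rule. Because the upper and lower limits of integration now depend on $x$, differentiating $\tilde K_\l f=\int_{-x}^x K_\l f\,du$ will produce boundary terms at $u=\pm x$, and these must be tracked carefully — this is the one genuine departure from the Toda argument, where the limits were fixed.

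The key observation that makes the boundary terms harmless is the vanishing $K_\l(x,x)=K_\l(x,-x)=0$, already recorded in the eigenfunction calculation above. When computing $\partial_x\tilde K_\l f$ and then $\partial_x^2\tilde K_\l f$, the first-order boundary contributions are proportional to $K_\l(x,\pm x)=0$ and drop out, exactly as they did in the verification that $\psi_\l$ is an eigenfunction. The second-order boundary terms involve $\partial_x K_\l$ and $\partial_x f$ evaluated at $u=\pm x$; here I would use that $f\in C^2_c(D)$ has compact support strictly inside the open cone $D=\{|u|<x\}$, so $f$ and all its derivatives vanish in a neighbourhood of the boundary $|u|=x$. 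Consequently every boundary term carrying a factor of $f$ or $\partial f$ vanishes identically, and the computation collapses to the interior one.

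With the boundary terms eliminated, the remainder is the same integration-by-parts manipulation as in Proposition~\ref{p-int-t}: substitute the intertwining relation \eqref{int}, namely $H_\l K_\l=(\tfrac12\partial_u^2-\l\partial_u)K_\l$, replace $\partial_x K_\l$ by $K_\l\,\partial_x\ln K_\l$, and integrate by parts in $u$ to move derivatives off $K_\l$ and onto $f$. The integrations by parts in $u$ also produce boundary evaluations at $u=\pm x$, but again these all carry factors of $f$ or its $u$-derivatives and hence vanish by compact support of $f$. The drift coefficient of $\partial_x$ in $A_\l$ is precisely $(\partial_x+\partial_u)\ln K_\l=\l+\tfrac{2}{x+u}-\tfrac1x$, as recorded in \eqref{ee}, so the terms reorganise into $\tilde K_\l A_\l f$ term by term, matching the second-order part $\tfrac12\partial_x^2+\tfrac12\partial_u^2+\partial_x\partial_u$, the first-order part $\l\partial_u+(\l+\tfrac{2}{x+u}-\tfrac1x)\partial_x$, with no zeroth-order remainder.

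The main obstacle, then, is purely bookkeeping: verifying that the boundary terms generated by the $x$-dependent limits and by the integrations by parts in $u$ genuinely cancel. I expect this to require only the two facts already in hand — $K_\l(x,\pm x)=0$ and $\operatorname{supp} f\subset D$ — rather than any new identity, so once the boundary analysis is dispatched the algebra is identical to the Toda computation and needs no repetition.
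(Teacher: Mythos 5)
Your proposal is correct and follows the same route as the paper, which simply invokes the Toda computation of Proposition~\ref{p-int-t} together with the intertwining relation \eqref{int}; your extra care with the $x$-dependent limits of integration is exactly the point the paper leaves implicit. Note only that the compact support of $f$ in the open cone $D$ already kills every boundary term (each carries a factor of $f$ or a derivative of $f$ at $u=\pm x$), so the vanishing $K_\l(x,\pm x)=0$ is not actually needed here --- it is needed for the eigenfunction computation, where the role of $f$ is played by the constant function $1$.
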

\begin{proof}
This follows from \eqref{int}, as in the proof of Proposition~\ref{p-int-t}.
\end{proof}

Now suppose $\l\in\R$.  Let $B$ be a standard one-dimensional Brownian motion and consider the coupled
stochastic differential equations obtained by adding white noise to $\l$ in \eqref{ee}, that is
\be\label{sde}
dU=dB+\l dt,\qquad dX=dU +\left(\frac{2}{X+U}-\frac1X\right) dt.
\ee

\begin{lem} For any initial condition $\nu\in\P(D)$, the stochastic differential equation \eqref{sde} has a 
unique strong solution with continuous sample paths in $D$.  It is a diffusion process in $D$ 
with infinitesimal generator $A_\l$ and the martingale problem for $(A_\l,\nu)$ is well-posed.
\end{lem}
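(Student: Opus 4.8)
The plan is to exploit the triangular, partly noise-free structure of \eqref{sde}. First observe that the second equation $dU=dB+\l\,dt$ decouples completely, so $U_t=U_0+B_t+\l t$ is a Brownian motion with drift, defined and non-exploding for all $t\ge 0$. Given the path of $U$, the first equation becomes a random ODE for $X$, and since the drift $(x,u)\mapsto(\l+2/(x+u)-1/x,\ \l)$ is smooth on the open set $D$ (where $x>0$ and $x+u>0$) while the diffusion coefficient is constant, the standard local theory gives a unique strong, continuous solution up to the exit time $\tau$ from $D$ (equivalently up to a possible explosion). The whole content of the statement is therefore to show $\tau=\infty$ almost surely.

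The boundary $\partial D$ is the union of the two rays $\{u=x\}$ and $\{u=-x\}$, $x\ge 0$. The key structural observation is that $T:=X-U$ is of finite variation: subtracting the two equations in \eqref{sde} kills the Brownian term, giving $\dot T=2/(X+U)-1/X=T/(X(X+U))$, which is strictly positive on $D$. Hence, starting from $T_0=X_0-U_0>0$, the process $T$ is pathwise nondecreasing and stays $\ge T_0$, so the ray $\{u=x\}$ is never approached. Writing $S:=X+U$ and $X=(S+T)/2$, this already forces $X\ge T_0/2$, so the corner $x=0$ and the singular term $-1/x$ are harmless as soon as we control $S$. It remains only to show that $S$ stays positive, i.e.\ that the ray $\{u=-x\}$ is not hit.

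The non-attainment of $\{S=0\}$ is the main obstacle. Adding the two equations in \eqref{sde} gives
\[
 dS=2\,dB+2\l\,dt+\Big(\frac{2}{S}-\frac{1}{X}\Big)dt,
\]
and, using $X\ge T_0/2$ from the previous step, the term $-1/X$ is bounded below on $\{S>0\}$. Thus $S$ dominates (pathwise, via a comparison theorem for one-dimensional It\^o processes driven by the same $B$) the autonomous diffusion $\underline S$ solving $d\underline S=2\,dB+c\,dt+(2/\underline S)\,dt$ for a suitable constant $c$, started at $S_0$. Near the origin the drift of $\underline S$ is the repulsive term $2/\underline S$, which after rescaling is a two-dimensional Bessel process; its scale function diverges to $-\infty$ at $0$, so by Feller's test $\underline S$ never reaches $0$. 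Comparison then yields $S\ge\underline S>0$, and a routine localisation at the stopping times $\inf\{t:\ S_t\le 1/n\}$ removes the apparent circularity between this bound and $X\ge T_0/2$. I expect this Bessel comparison, together with the bookkeeping of the localisation, to be the technically delicate point; everything else is soft.

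Once $S$ is known to stay positive there is no explosion: on any interval $[0,t]$ the path $S$ is continuous and strictly positive, hence bounded below, so $\dot T=2T/(S(S+T))\le 2/S$ is bounded and $T$—and therefore $X=U+T$—grows at most linearly. This gives $\tau=\infty$ and a unique global strong solution in $D$. The diffusion and martingale-problem assertions then follow by standard arguments: for $f\in C^2_c(D)=\D(A_\l)$, It\^o's formula shows that $f(X_t,U_t)-f(X_0,U_0)-\int_0^t A_\l f(X_s,U_s)\,ds$ is a martingale (the coefficients of $A_\l$ are bounded on the compact support of $f$), so the strong solution solves the martingale problem for $(A_\l,\nu)$; pathwise uniqueness together with the Yamada--Watanabe theorem gives uniqueness in law, hence well-posedness of the martingale problem and the strong Markov (diffusion) property.
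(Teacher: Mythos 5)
Your proposal is correct and follows essentially the same route as the paper: use that $X-U$ is nondecreasing to reduce everything to non-vanishing of $S=X+U$, bound the drift of $S$ below by $2/S-2/\delta$ with $\delta=X_0-U_0$, and compare with a one-dimensional Bessel-type diffusion for which $0$ is an entrance boundary. Your derivation of the drift bound via $X=(S+T)/2\ge T_0/2$ is a slightly cleaner version of the paper's two-case estimate (and there is in fact no circularity to localise away, since that pointwise bound needs only $T\ge T_0$ and $S>0$), and your explicit treatment of non-explosion to $+\infty$ is a small point the paper leaves implicit.
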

\begin{proof}
The function $$(x,u)\mapsto \left(\l+\frac{2}{x+u}-\frac1x,\ \l\right)$$ is uniformly Lipschitz and bounded 
on $$D_\epsilon=\{(x,u)\in D:\ x+u>\epsilon,\ x-u>\epsilon\}$$ for any $\epsilon>0$, so by
standard arguments, for any fixed initial condition $(x,u)\in D$,
the SDE \eqref{sde} has a unique strong solution with continuous sample 
paths up until the first exit time $\tau$ from the domain $D$.  We are therefore 
required to show that $\tau=+\infty$ almost surely.  As $X_t-U_t$ is non-decreasing, 
this is equivalent to showing that $Y_t=X_t+U_t$ almost surely never vanishes.
We show this by a simple comparison argument.  Set
$$b(x,u)=\frac{2}{x+u}-\frac1x=\frac{x-u}{x+u}\ \frac1x,$$
and note that for $(x,u)\in D$ with $x-u\ge \delta$, where $\delta>0$,
$$b(x,u)> \frac2{x+u}-\frac2\delta.$$
Indeed, if $x\le\delta/2$ then
$$b(x,u)=\frac{x-u}{x+u}\ \frac1x\ge \frac\delta{x+u}\frac2\delta >  \frac2{x+u}-\frac2\delta;$$
on the other hand, if $x> \delta/2$, then
$$b(x,u)=\frac{2}{x+u}-\frac1x>\frac2{x+u}-\frac2\delta.$$
Now, $$dY=2dU+b(X,U)dt,$$  
and it is straightforward to see that the one-dimensional SDE
$$dR=2 dU+\left(\frac2R-\frac2\delta\right) dt$$
has a unique strong solution with continuous sample paths in $(0,\infty)$ for any $R_0=r>0$;
by the usual boundary classification $0$ is an entrance boundary for this diffusion.
Thus, if $(X_0,U_0)=(x,u)$ and we set $\delta=x-u$ and $r=x+u$, then $Y_t\ge R_t>0$ 
almost surely for all $t\ge 0$, proving the first claim. The second claim follows.
\end{proof}

Combining this with the intertwining relation \eqref{int2}, we obtain:

\begin{thm}\label{cg-pitman}
Let $\rho\in\P((0,\infty))$ and $\nu= \rho(dx) \nu_x(du)\in\P(D)$,
where $\nu_x(du)= \psi_\l(x)^{-1}K_\l(x,u)du$.
Let $(X,U)$ be a diffusion process in $D$ with initial condition $\nu$ and infinitesimal generator $A_\l$.
Then $X$ is a diffusion process in $(0,\infty)$ with infinitesimal generator
$$L_\l=\psi_\l(x)^{-1} H_\l \psi_\l(x)=\frac12\partial_x^2+\partial_x\ln \psi_\l(x) \cdot \partial_x.$$
Moreover, for each $t\ge 0$ and $g\in B(\R)$, 
$$E[g(U_t)|\ X_s,\ 0\le s\le t]=\int_{-X_t}^{X_t} g(u) \nu_{X_t}(du),$$
almost surely.
\end{thm}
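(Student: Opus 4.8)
The plan is to recognize that Theorem~\ref{cg-pitman} is the exact analogue of Theorem~\ref{my} for the rational Calogero-Moser setting, and so the proof should proceed by the same machinery: combine the intertwining relation \eqref{int2} with the general theory of Markov functions (Theorem~\ref{mf} in Appendix A). The essential inputs I would verify are (i) a measurable projection map $\gamma:D\to(0,\infty)$, (ii) a Markov transition kernel $\L$ from $(0,\infty)$ to $D$ that is a right inverse to composition with $\gamma$ in the appropriate sense, and (iii) the algebraic intertwining $L_\l\L f=\L A_\l f$ descending from \eqref{int2}. The probabilistic conclusion---that $X$ is itself Markov with generator $L_\l$, together with the conditional-law formula---then follows from the abstract Markov-functions theorem exactly as in the Toda case.

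First I would set $\gamma(x,u)=x$, which is continuous on $D$, and define the kernel $\L f(x)=\int_{-x}^x \nu_x(du) f(x,u)=\psi_\l(x)^{-1}\int_{-x}^x K_\l(x,u)f(x,u)\,du$ for $f\in B(D)$. By construction $\L$ is a Markov transition kernel (the measures $\nu_x$ are probability measures on the fiber $\{|u|<x\}$ by the very definition of $\psi_\l$), and for $g\in B((0,\infty))$ one has $\L(g\circ\gamma)=g$ since $g(x)$ factors out of the $u$-integral and $\nu_x$ has total mass one. Next I would establish the operator intertwining \eqref{int2} in the quotient form
\be\label{int2-L}
L_\l \L f=\L A_\l f,\qquad f\in\D(A_\l).
\ee
This is immediate from the definitions: $\L f=\psi_\l^{-1}\tilde K_\l f$ and $L_\l=\psi_\l^{-1}H_\l\psi_\l$, so $L_\l\L f=\psi_\l^{-1}H_\l\tilde K_\l f=\psi_\l^{-1}\tilde K_\l A_\l f=\L A_\l f$, using \eqref{int2} in the middle step.

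With these structural ingredients in place I would invoke Theorem~\ref{mf}. The hypotheses to check are that $\D(A_\l)=C^2_c(D)$ is an algebra that separates points and is convergence determining on $D$---all standard for compactly supported smooth functions on an open subset of $\R^2$---and that the martingale problems for $(A_\l,\nu)$ and for $L_\l$ are well-posed. Well-posedness for $(A_\l,\nu)$ is exactly the content of the preceding Lemma, and well-posedness for $L_\l$ I would obtain by the same device used in the proof of Theorem~\ref{my}: enlarge the domain to $\D(L_\l)=\L(\D(A_\l))\cup C^2_c((0,\infty))$ and use It\^o's lemma together with \eqref{int2-L} to transfer the solution of the $A_\l$-martingale problem to an $L_\l$-solution. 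The Markov-functions theorem then yields both that $X=\gamma(X,U)$ is Markov with generator $L_\l$ started from $\rho$, and the conditional expectation formula $E[g(U_t)\mid X_s,\,0\le s\le t]=\int_{-X_t}^{X_t} g(u)\,\nu_{X_t}(du)$.

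The main obstacle I anticipate is not algebraic but \emph{analytic}: confirming that the fiber measures $\nu_x$ genuinely interact correctly with the boundary of $D$. Unlike the Toda case, where $u$ ranges over all of $\R$, here the domain $D=\{|u|<x\}$ has a nontrivial boundary, and the kernel $K_\l(x,u)=e^{\l u}(x^2-u^2)/x$ vanishes there. The computation of $H_\l\psi_\l=0$ already earlier in the section hinged delicately on these boundary terms, and the Leibnitz/integration-by-parts manipulations underlying \eqref{int2} similarly depend on $K_\l$ vanishing at $u=\pm x$. I would therefore take care that the boundary contributions in establishing \eqref{int2} and \eqref{int2-L} vanish (which they do precisely because $K_\l(x,\pm x)=0$), and that the diffusion $(X,U)$ started from $\nu$ remains in the open domain $D$ for all time---this last point being exactly what the Lemma guarantees via its comparison argument controlling $Y=X+U$ away from zero. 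Given those facts, the remaining verifications are routine and parallel to the Toda proof.
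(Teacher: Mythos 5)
Your overall architecture coincides with the paper's: the projection $\gamma(x,u)=x$, the kernel $\L$ built from the fiber measures $\nu_x$, the quotient intertwining $L_\l \L f=\L A_\l f$ descending from \eqref{int2}, the algebraic properties of $C^2_c(D)$, well-posedness of $(A_\l,\nu)$ from the preceding Lemma, and an appeal to Theorem~\ref{mf}. However, there is one substantive omission and one technical one.

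The substantive gap is the well-posedness of the one-dimensional martingale problem. You dispose of it by saying you would ``enlarge the domain to $\L(\D(A_\l))\cup C^2_c((0,\infty))$ and use It\^o's lemma together with the intertwining to transfer the solution.'' That device (which the paper also uses) only upgrades well-posedness from the domain $C^2_c((0,\infty))$ to the enlarged domain; it does not establish well-posedness of the martingale problem for $(L_\l,\rho)$ with domain $C^2_c((0,\infty))$ in the first place, and this is where the paper does the analytic work specific to this example. It computes the drift $b_\l(x)=\partial_x\ln\psi_\l(x)$ explicitly in terms of the modified Bessel function $I_{3/2}$ (equal to $2/x$ when $\l=0$), shows it is bounded below by $1/(2x)$ and converges to $\l$ at infinity, uses $H_\l\psi_\l=0$ to obtain $\partial_x^2\ln\psi_\l=2/x^2-\l^2-b_\l^2$ and hence that $b_\l$ is uniformly Lipschitz and bounded on $(a,\infty)$ for every $a>0$, and concludes that $0$ is an entrance boundary and $+\infty$ a natural boundary. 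Without this, nothing rules out the candidate one-dimensional diffusion reaching $0$, nor does one get uniqueness. Notably, your ``main obstacle'' paragraph worries about the boundary of $D$ for the two-dimensional process and about boundary terms in the integration by parts underlying \eqref{int2} --- which for $f\in C^2_c(D)$ vanish simply because $f$ is compactly supported in the open set $D$ --- but misses this one-dimensional boundary classification at $x=0$, which is the genuinely new input relative to the Toda case (where the state space is all of $\R$).

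The technical point: Theorem~\ref{mf} requires the state spaces to be complete, separable metric spaces with $E$ locally compact, and $D$, $(0,\infty)$ with the Euclidean metric are not complete. The paper re-metrizes via the diffeomorphisms $(x,u)\mapsto(\ln(x+u),\ln(x-u))$ and $x\mapsto\ln x$, identifying $D$ with $\R^2$ and $(0,\infty)$ with $\R$ without altering the topologies or the classes $B$, $C_b$, $\P$, $C^2_c$. You should record this (or an equivalent observation) before invoking the theorem.
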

\begin{proof}  This follows from the intertwining relation \eqref{int2} using 
Theorem~\ref{mf}.  First note that we can identify $D$ with $\R^2$ via
the one-to-one mapping $(x,u)\mapsto (\ln(x+u),\ln(x-u))$ and thus regard $D$, equipped with the metric 
induced from the Euclidean metric on $\R^2$, as a complete, separable, locally compact metric space.  Similarly,
we identify $(0,\infty)$ with $\R$ via the one-to-one mapping $x\mapsto \ln x$ and regard $(0,\infty)$, equipped 
with the metric induced from the Euclidean metric on $\R$, as a complete, separable metric space. 
Note that this does not alter the topologies on $D$ and $(0,\infty)$, or the definitions of $B(D)$,
$C_b(D)$, $\P(D)$, $C^2_c(D)$, $B((0,\infty))$, $C_b((0,\infty))$, $\P((0,\infty))$, $C^2_c((0,\infty))$, 
and so on: it is just a smooth change of variables.

The map $\gamma:D\to (0,\infty)$ defined by $\gamma(x,u)=x$ is continuous and the Markov transition
kernel $\L $ from $(0,\infty)$ to $D$ defined by
$$\L f(x)= \int_{-x}^x \nu_x(du) f(x,u),\qquad f\in B(D)$$
satifies $\L (g\circ\gamma)=g$ for $g\in B((0,\infty))$.
Moreover, by  \eqref{int2}, 
\be\label{int-cg-L} 
L_\l \L f=\L A_\l f,\qquad f\in\D(A_\l).
\ee
Now, $\D(A_\l)=C^2_c(D)$ is closed under multiplication, separates points and is convergence 
determining.  Thus, all that remains to be shown is that the martingale problem for $(L_\l,\rho)$, for some
$\D(L_\l)\supset \L (\D(A_\l))$, is well-posed.  

As $\psi_\l(x)=\psi_{-\l}(x)$, we can assume $\l\ge 0$. 
The drift $b_\l(x)=\partial_x\ln\psi_\l(x)$ is given by $2/x$ if $\l=0$ and, for $\l>0$,
$$b_\l(x)=\frac1{2x}+\l\ \frac{I_{3/2}'(\l x)}{I_{3/2}(\l x)}=\frac1{2x}+\l \ \frac{I_{1/2}(\l x)+I_{5/2}(\l x)}{2I_{3/2}(\l x)}.$$
This is bounded below by $1/2x$ and converges to $\l$ as $x\to+\infty$.  
In fact, $H_\l\psi_\l=0$ implies $$\partial_x^2\ln \psi(x)=2/x^2-\l^2-b_\l(x)^2,$$ hence $b_\l(x)$ is uniformly 
Lipschitz and bounded on $(a,\infty)$ for any $a>0$.  It follows that $0$ is an entrance boundary and $+\infty$
is a natural boundary for this one-dimensional diffusion process
and the martingale problem for $(L_\l,\rho)$ with $\D(L_\l)=C^2_c((0,\infty))$ is well-posed.
By It\^o's lemma and the intertwining relation \eqref{int-cg-L}, we conclude that the martingale problem for 
$(L_\l,\rho)$ with $\D(L_\l)=\L (\D(A_\l))\cup C^2_c((0,\infty))$ is also well-posed, as required.
\end{proof}

\begin{figure}\label{fig}
\begin{center}
\includegraphics[scale=0.6,angle=90]{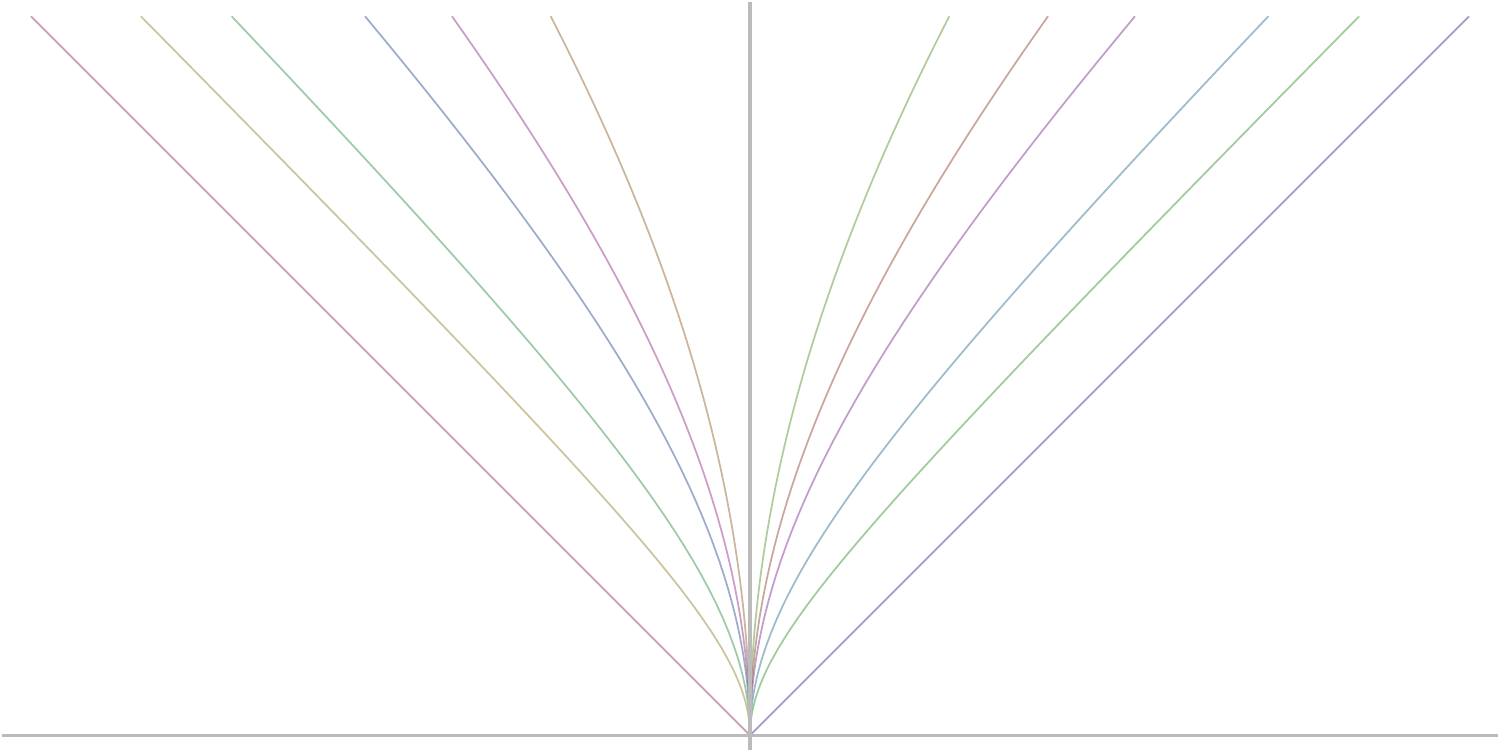}
\caption{The rational Calogero-Moser flow in $D$, shown here with $u$ as the horizontal
 and $x$ as the vertical coordinate}
\end{center}
\end{figure}

To summarise, for any given value of the constant of motion $\l=\dot u\in\R$, 
the classical flow in $D$ is along the curve $2u=\l(x^2-u^2)$ (see Figure 2),
 according to the evolution equations
$$\dot u =  \l,\qquad \dot x = \dot u + \frac{2}{x+u}-\frac1x,$$
and the $x$-coordinate satisfies the equation of motion $\ddot x=-1/x^3$.  
Adding noise to the constant of motion $\l$ gives the {\em stochastic B\"acklund transformation}
$$dU=dB+\l dt,\qquad dX=dU +\left(\frac{2}{X+U}-\frac1X\right) dt;$$
according to Theorem~\ref{cg-pitman},
for appropriate (random) initial conditions, $U$ evolves as a Brownian
motion with drift $\l$ and the $X$ evolves as a diffusion process in $(0,\infty)$ 
with infinitesimal generator $L_\l$.

When $\l=0$, as $u_0(x)=0$, the B\"acklund transformation reduces to $\dot x=1/x$,
as in the example discussed in the introduction.  Note however that in this setting
$$L_0=\frac12\partial_x^2+\frac2x \partial_x,$$
and the stochastic differential equations \eqref{sde} do {\em not} reduce to the one 
discussed in the introduction which, for example, gives the simpler construction of the
diffusion process with generator $L_0$ as the solution to the stochastic differential equation
$$dX=dB+\frac2X dt.$$

To see how the above construction relates to the semi-classical limit,
let us introduce a parameter $\mu\ge 1$ and consider
$$H=\frac1{2\mu}\partial_x^2-\frac{1+\mu}{2x^2}.$$
Then all of the above carries over with $K_\l$ replaced by $(K_\l)^\mu$ and
$$\psi^{(\mu)}_\l(x)=\int_{-x}^x K_\l(x,u)^\mu du.$$
In this setting, Theorem~\ref{cg-pitman} can be restated as follows.
Let $B$ be a Brownian motion and $(X,U)$ the unique strong solution in $D$ to
\be\label{sde-alpha}
dU=\mu^{-1/2} dB+\l dt,\qquad dX=dU +\left(\frac{2}{X+U}-\frac1X\right) dt
\ee
with $X_0=x>0$ and $U_0$ chosen at random in $(-x,x)$ according to
$$\nu_x^{(\mu)}(du) = \psi^{(\mu)}_\l(x)^{-1} K_\l(x,u)^\mu du.$$
Then $X$ evolves as a diffusion process in $(0,\infty)$ with infinitesimal generator 
$$\frac1{2\mu}\partial_x^2+\frac1\mu \partial_x\ln\psi^{(\mu)}_\l(x)\partial_x.$$
When $\mu\to\infty$, the SDE \eqref{sde-alpha} reduces to the deterministic
evolution \eqref{ee} and the initial distribution $\delta_x\times \nu_x^{(\mu)}$
concentrates on $\delta_x\times\delta_{u_\l(x)}$ where $u_\l(x)$ is the unique
solution in $(-x,x)$ to the critical point equation $\partial_u\ln K_\l=0$ 
or, equivalently $2u=\l(x^2-u^2)$.  On the other hand, one might expect
$$\frac1\mu \partial_x\ln\psi^{(\mu)}_\l(x)\to \partial_x\left[ \ln K_\l(x,u_\l(x))\right],$$
(which is indeed the case) and so in the limit as $\mu\to\infty$, the evolution of $X$ 
is according to the gradient flow
$$\dot x = \partial_x\left[ \ln K_\l(x,u_\l(x))\right].$$
Comparing this with \eqref{ee} gives, as in the Toda case,
$$ \partial_x\left[ \ln K_\l(x,u_\l(x))\right] =  \left[ \partial_x\ln K_\l\right] (x,u_\l(x)),$$
which can be verified directly.

If $-1/2 \le \mu\le 1$ and we consider 
$$H=\frac1{2}\partial_x^2-\frac{\mu(\mu+1)}{2x^2},$$
then things are more complicated, because now the evolution
$$dU=dB+\l dt,\qquad dX=dU +\mu \left(\frac{2}{X+U}-\frac1X\right) dt$$
can reach the boundary of $D$ and one needs to introduce reflecting boundary conditions
on the boundary $x+u=0$ in the $x$ direction to ensure that the appropriate intertwining 
relation holds; even then, proving the analogue of Theorem~\ref{cg-pitman} is considerably
more technical.  
One can also consider the case $-3/2\le \mu< -1/2$, but then the diffusion with infinitesimal
generator $L_\l$ will also require either reflecting (for $\mu>-3/2$) or absorbing (for $\mu=-3/2$) 
boundary conditions at zero.

Formally it can be seen that the analogue of Theorem~\ref{cg-pitman}, in the case $\mu=0$, 
corresponds to Pitman's `$2M-X$' theorem, for general drift and initial condition~\cite{pitman,rp}, 
which can be stated as follows.  Let $x\ge 0$ and $U$ be a Brownian motion 
with drift $\l$ and $U_0$ chosen at random in $[-x,x]$ with probability density proportional to $e^{\l u}$.  Set
$$X_t=U_t-\min\{2\inf_{s\le t}U_s,U_0-x\},\qquad t\ge 0.$$ 
Then $(X,U)$ is a reflected Brownian motion (with singular covariance) in the closure of $D$ 
and $X$ is a diffusion process in $[0,\infty)$ started at $x$ with infinitesimal generator 
$$\frac12\partial_x^2+\l\coth(\l x) \partial_x.$$

\section{Hyperbolic Calogero-Moser system I}

The above example extends to the hyperbolic case, taking
$$K(x,u)=\Big[\frac{\sinh\big(\epsilon\frac{x+u}2\big)\sinh\big(\epsilon\frac{x-u}2\big) }{\sinh \epsilon x}\Big]^\mu ,\qquad |u|< x.$$
We will assume for convenience that $\mu\ge 1$.  
Now,
\be\label{bac3-hcm2}
(\partial_x\ln K)^2-(\partial_u\ln K)^2=\frac{\epsilon^2\mu^2}{\sinh^2\epsilon x }
\ee
and
\be\label{bac4-hcm2}
\partial_x^2\ln K-\partial_u^2\ln K=\frac{\epsilon^2\mu}{\sinh^2\epsilon x }.
\ee
The corresponding B\"acklund transtormation
\be\label{bt-hcm2}
\dot u = -\partial_u \ln K,\qquad \dot x =\partial_x\ln K
\ee
agrees with the one given in~\cite{w} and has the property that, if \eqref{bt-hcm2} holds, 
then $x$ satisfies the equations of motion of the hyperbolic Calogero-Moser system with Hamiltonian 
\be\label{hcm-ham}
\frac12 p^2-\frac{\epsilon^2\mu^2}{2\sinh^2\epsilon x},
\ee
and $\dot u=\l$ is a conserved quantity for the coupled system.
Indeed, as before, differentiating \eqref{bac3-hcm2} with respect to $x$ and $u$ yields, respectively,
\be\label{bac1-hcm2}
\ddot x = \partial_x^2\ln K\ \partial_x\ln K-\partial_u\partial_x\ln K\ \partial_u\ln K  
=\partial_x\ \frac{\epsilon^2\mu^2}{2\sinh^2\epsilon x },
\ee
and
\be\label{bac2-hcm2}
\ddot u = \partial_u^2\ln K\ \partial_u\ln K-\partial_x\partial_u\ln K\ \partial_x\ln K  =  0.
\ee
It also follows from \eqref{bac3-hcm2} that $\l$ is an eigenvalue of the Lax matrix
\be\label{hcm-lax}
\begin{pmatrix} p & \epsilon\mu/\sinh\epsilon x  \\ - \epsilon\mu/\sinh\epsilon x  & -p \end{pmatrix} .
\ee

The equation $\dot u=\l$ is equivalent to the critical point equation $\partial_u \ln K_\l=0$.
Using this equation, namely
\be\label{cph2}
\coth\big(\epsilon\frac{x-u}2\big)-\coth\big(\epsilon\frac{x+u}2\big) = \frac{2\l}{\epsilon\mu},
\ee
we can rewrite the evolution equations \eqref{bt-hcm2} as
\be\label{eeh2}
\dot u =  \l,\qquad \dot x = \l + b(x,u) = (\partial_x+\partial_u)\ln K_\l ,
\ee
where
$$b(x,u)=(\partial_x+\partial_u)\ln K=\mu\epsilon\left[ \coth\left(\epsilon\ \frac{x+u}2\right) - \coth\epsilon x \right].$$
The equation \eqref{cph2} has a unique solution $u_\l(x)\in\R$ for each $x>0$ and $\l\in\R$.  
The relation \eqref{cph2} is stable under the new evolution equations \eqref{eeh2},
and is now required to be in force in order to guarantee that $(x,p)$ evolves according to 
the hyperbolic Calogero-Moser flow on the iso-spectral manifold corresponding to $\l$.  

Note that $u_0(x)=0$ for all $x>0$, so when $\l=0$, we must have $u(t)=0$ for all $t\ge 0$ and
the equation for $x$ simplifies to $\dot x=\epsilon\mu/\sinh\epsilon x$, which admits a unique semi-global
solution for any initial condition $x(0)=x_0>0$, defined for all $t\ge 0$ by
\be\label{h0-sol}
x(t)=\frac1\epsilon \cosh^{-1}\big(\cosh\epsilon x_0+\epsilon^2\mu t\big).
\ee

For $\l\in\R\backslash\{0\}$, the function $u_\l$ is a bijection from $(0,\infty)$ to $\R$, with inverse given by
$$u_\l^{-1}(u)=\frac2\epsilon \cosh^{-1}\sqrt{\frac{\epsilon\mu}{2\l}\sinh\epsilon u+\cosh^2\frac{\epsilon u}2}.$$
It follows that, for any given $\l\in\R$, the evolution equations \eqref{eeh2} are well-posed on the corresponding iso-spectral 
manifold (defined in these coordinates by the relation \eqref{cph2}) in the sense that they admit a unique semi-global 
solution. For $\l\in\R\backslash\{0\}$ and initial condition $x(0)=x_0>0$, 
the solution is given explicitly for all $t\ge 0$ by $u(t)=u_\l(x_0)+\l t$ and $x(t)=u_\l^{-1}(u(t))$.
As before, the equations \eqref{eeh2} provide the correct framework into which we 
can introduce noise with the desired outcome.  
 
Combining \eqref{bac3-hcm2} and \eqref{bac4-hcm2} gives the intertwining relation
\be\label{inth2}
H_\l K_\l= \big(\frac12\partial_u^2-\l\partial_u\big) K_\l
\ee 
where $K_\l=e^{\l u} K$,  $H_\l=H-\l^2/2$, and
\be\label{hcm-qham}
H=\frac12 \partial_x^2-\frac{\epsilon^2\mu(\mu+1)}{2\sinh^2\epsilon x }.
\ee
As before, it follows, using \eqref{inth2} and the Leibnitz rule, that
$$\psi_\l(x)=\int_{-x}^x K_\l(x,u) du$$
is an eigenfunction of $H$ with eigenvalue $\l^2/2$. 
Indeed, if $\mu> 1$, then $K_\l$, $\partial_x K_\l$ and $\partial_u K_\l$ vanish
for $u=\pm x$ and the claim is immediate.  If $\mu=1$, then 
$$\partial_x K_\l(x,x) = \frac\epsilon2 e^{\l x},\quad \partial_x K_\l(x,-x) = \frac\epsilon2 e^{-\l x},$$
$$ \partial_u K_\l(x,x) = -\frac\epsilon2 e^{\l x},\qquad \partial_u K_\l(x,-x) = \frac\epsilon2 e^{-\l x}$$
and
$$K_\l(x,x)=K_\l(x,-x)=0.$$
By the Leibnitz rule,
$$\partial_x \psi_\l = \int_{-x}^x \partial_x K_\l du +K_\l(x,x)+K_\l(x,-x)=\int_{-x}^x \partial_x K_\l du,$$
and so
\bea
\partial_x^2 \psi_\l &=& \int_{-x}^x \partial_x^2 K_\l du +\partial_x K_\l(x,x)+\partial_x K_\l(x,-x)\\
&=&\int_{-x}^x \partial_x^2 K_\l du+\frac\epsilon2(e^{\l x}+e^{-\l x}).
\eea
It follows, using \eqref{int}, that
\bea
H_\l \psi_\l &=& \int_{-x}^x H_\l K_\l du + \frac\epsilon4(e^{\l x}+e^{-\l x})\\
&=&  \int_{-x}^x \big( \frac12\partial_u^2 - \l\partial_u\big) K_\l du + \frac\epsilon4(e^{\l x}+e^{-\l x})\\
&=& \big(\frac12 \partial_u -\l) K_\l\Big\vert^{u=x}_{u=-x} + \frac\epsilon4(e^{\l x}+e^{-\l x})=0,
\eea
as required.  

For example, when $\mu=1$,
$$\psi_\l(x)=\frac\epsilon{\epsilon^2-\l^2}\left[ \frac\epsilon\l\coth\epsilon x\sinh\l x-\cosh\l x\right].$$
In particular, $\psi_0(x)=x\coth\epsilon x -1/\epsilon$.

Continuing as before, this kernel function leads to a hyperbolic version of Theorem~\ref{cg-pitman}, 
 valid for any $\l\in\R$.

\section{Hyperbolic Calogero-Moser system II}

There is another choice of kernel function which leads to a very
different `version' of Theorem~\ref{cg-pitman}, valid only for a restricted range of $\l$.  
It is based on the kernel
functions considered in~\cite{hr1,hr} and, in the rational case, reduces to the example discussed
in the introduction.

Let $D=(0,\infty)\times\R$ and consider the kernel function 
$$ K(x,u) = \left[ \tanh\left(\epsilon\ \frac{x+u}2\right)+\tanh\left(\epsilon\ \frac{x-u}2\right)\right]^\mu, \qquad (x,u)\in D .$$
Note that we can also write
$$K(x,u)= \left[ \frac{\sinh\epsilon x }{\cosh(\epsilon(x+u)/2)\cosh(\epsilon(x-u)/2)}\right]^\mu.$$
Now,
\be\label{bac3-hcm1}
(\partial_x\ln K)^2-(\partial_u\ln K)^2=\frac{\epsilon^2\mu^2}{\sinh^2\epsilon x }
\ee
and
\be\label{bac4-hcm1}
\partial_x^2\ln K-\partial_u^2\ln K=-\frac{\epsilon^2\mu}{\sinh^2\epsilon x }.
\ee
The corresponding B\"acklund transformation
\be\label{bt-hcm1}
\dot u = -\partial_u \ln K,\qquad \dot x =\partial_x\ln K
\ee
has the property that,  if \eqref{bt-rcm} holds, then $x$ satisfies the equations of motion of the
hyperbolic Calogero-Moser system with Hamiltonian \eqref{hcm-ham} and
$\dot u=\l$ is a conserved quantity for the coupled system, as can be seen by
differentiating \eqref{bac3-hcm1} with respect to $x$ and $u$, respectively.
It also follows from \eqref{bac3-hcm1} that $\l$ is an eigenvalue of the Lax matrix \eqref{hcm-lax}.

Now the equation $\dot u=\l$ is equivalent to the critical point equation 
$\partial_u \ln K_\l=0$, where $K_\l=e^{\l u} K$.  Using this equation,
namely
\be\label{cph}
\tanh\left(\epsilon\ \frac{x+u}2\right)-\tanh\left(\epsilon\ \frac{x-u}2\right)=\frac{2\l}{\epsilon\mu} ,
\ee
we can rewrite the evolution equations \eqref{bt-hcm2} as
\be\label{eeh}
\dot u =  \l,\qquad \dot x = \l + b(x,u) = (\partial_x+\partial_u)\ln K_\l ,
\ee
where
$$b(x,u)=(\partial_x+\partial_u)\ln K=\mu\epsilon\left[ \coth\epsilon x -\tanh\left(\epsilon\ \frac{x+u}2\right)\right].$$
In this setting, the critical point equation \eqref{cph} only has a solution $u_\l(x)\in\R$ if $|\l|<\mu\epsilon$,
in which case it is unique.  We note that $u_0(x)=0$ for all $x>0$ and $u_\l(x)\to\pm\infty$ when $\l\to\pm\mu\epsilon$.
The relation \eqref{cph} is stable under the new evolution equations \eqref{eeh},
and is now required to be in force in order to guarantee that $(x,p)$ evolves according to 
the hyperbolic Calogero-Moser flow on the iso-spectral manifold corresponding to $\l$.  

When $\l=0$, we must have $u(t)=0$ for all $t\ge 0$ and the equation for $x$ simplifies to 
$\dot x=\epsilon\mu/\sinh\epsilon x$, as in the previous example, which admits a unique 
solution for any initial condition $x(0)=x_0>0$, defined for all $t\ge 0$ by \eqref{h0-sol}.

For $\l>0$, the function $u_\l$ is a bijection from $(0,\infty)$ to $(0,\infty)$, with inverse
$$u_\l^{-1}(u)=\frac2\epsilon \cosh^{-1}\sqrt{\frac{\epsilon\mu}{2\l}\sinh\epsilon u-\sinh^2\frac{\epsilon u}2}.$$
Note that the constraint $\l<\epsilon\mu$ ensures that the quantity in the square root is positive.
For $\l<0$, $u_\l$ is a bijection from $(0,\infty)$ to $(-\infty,0)$, with inverse given by the same formula.
It follows that, given any $\l\in\R$, the evolution equations \eqref{eeh} are well-posed on the corresponding iso-spectral 
manifold (defined in these coordinates by the relation \eqref{cph}) in the sense that they admit a unique semi-global 
solution. For $\l\in\R\backslash\{0\}$ and initial condition $x(0)=x_0>0$, the solution is given for all $t\ge 0$ by 
$u(t)=u_\l(x_0)+\l t$ and $x(t)=u_\l^{-1}(u(t))$.
As before, the evolution equations \eqref{eeh} provide the correct framework into which we 
can introduce noise with the desired outcome.  

Now let 
$$H=\frac12 \partial_x^2-\frac{\epsilon^2\mu(\mu-1)}{2\sinh^2\epsilon x },$$ and write $H_\l=H-\l^2/2$.
Note that this Hamiltonian has a different coupling constant to the one in \eqref{hcm-qham}, reflecting the
difference between \eqref{bac4-hcm1} and \eqref{bac4-hcm2}.
Combining \eqref{bac3-hcm1} and \eqref{bac4-hcm1} gives the intertwining relation 
\be\label{inth}
H_\l K_\l= \big(\frac12\partial_u^2-\l\partial_u\big) K_\l
\ee 
and it follows, using the Leibnitz rule, that for $|\Re\l|<\epsilon\mu$,
$$\psi_\l(x)=\int_{-\infty}^\infty K_\l(x,u) du$$
is an eigenfunction of $H$ with eigenvalue $\l^2/2$.  

As noted in \cite[Equation (4.16)]{hr}, the eigenfunction $\psi_\l$ is related to the
associated Legendre function of the first kind by
\be\label{h-form}
\psi_\l(x)=\frac{2^{2\mu+3/2}}{\sqrt{\pi}\epsilon} (\sinh\epsilon x)^{1/2} \frac{\Gamma(\mu+\l/\epsilon)\Gamma(\mu-\l/\epsilon)}{\Gamma(\mu)}
P^{\tfrac12-\mu}_{\tfrac\l\epsilon-\tfrac12}(\cosh\epsilon x).
\ee
We note also that $\psi_\l(x)=\psi_{-\l}(x)$, as can be seen, for example, from the functional equation
$P^a_{-b}(z)=P^a_{b-1}(z)$,
and $$\psi_0(x)=\frac{2\sqrt{\pi}\Gamma(\mu)}{\epsilon\Gamma(\mu+1/2)} (\sinh\epsilon x)^\mu.$$

These are {\em not} the same eigenfunctions which were obtained in the previous section, even
taking account of the different coupling constants.  For example, taking $\mu=2$ here gives
$\psi_0(x)=8(\sinh\epsilon x)^2/3\epsilon$,  which is different from the eigenfunction 
$\psi_0(x)=x\coth\epsilon x -1/\epsilon$ of the previous section with $\mu=1$;  both are
positive on $(0,\infty)$, vanish at zero, and satisfy
$$\psi_0''-\frac{\epsilon^2}{\sinh^2\epsilon x }\psi=0,$$ 
but they are not equal.  On the other hand, they agree (up to a constant factor) in the limit 
as $\epsilon\to 0$, which corresponds to the rational case.

Consider the integral operator defined, for suitable $f:D\to\R$, by
$$\tilde K_\l f(x) = \int_{-\infty}^\infty K_\l(x,u) f(x,u) du,$$
and the differential operator, defined on $\D(A_\l)=C^2_c(D)$, by
$$A_\l=\frac12\partial_x^2+\frac12\partial_u^2+\partial_x\partial_u+\l\partial_u+\left(\l+b(x,u)\right) \partial_x.$$
\begin{prop}  
For $|\Re\l|<\epsilon\mu$ and $f\in \D(A_\l)$,
\be\label{int-h}
H_\l \tilde K_\l f= \tilde K_\l A_\l f.
\ee 
\end{prop}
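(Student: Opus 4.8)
The plan is to mirror exactly the computation used in Proposition~\ref{p-int-t} for the Toda case, since the structural ingredients are identical: we have an intertwining relation \eqref{inth} of the same form $H_\l K_\l=(\tfrac12\partial_u^2-\l\partial_u)K_\l$, and the drift appearing in $A_\l$ is precisely $b(x,u)=(\partial_x+\partial_u)\ln K_\l-\l$, so that $\l+b(x,u)=(\partial_x+\partial_u)\ln K_\l$. First I would apply $H_\l$ to $\tilde K_\l f(x)=\int_{-\infty}^\infty K_\l(x,u)f(x,u)\,du$, differentiating under the integral sign and using Leibnitz' rule on the $\partial_x^2$ term to expand $\partial_x^2(K_\l f)=(\partial_x^2 K_\l)f+2(\partial_x K_\l)(\partial_x f)+K_\l\partial_x^2 f$. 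Substituting the intertwining relation \eqref{inth} replaces the $(H_\l K_\l)f$ term by $(\tfrac12\partial_u^2 K_\l-\l\partial_u K_\l)f$.

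The next step is to move the $u$-derivatives off $K_\l$ and onto $f$ by integrating by parts in $u$, and to rewrite the cross term $(\partial_x K_\l)(\partial_x f)$ by adding and subtracting $(\partial_u K_\l)(\partial_x f)$ so that the combination $((\partial_x+\partial_u)\ln K_\l)\,\partial_x f$ appears, matching the drift in $A_\l$. A further integration by parts on the residual $-(\partial_u K_\l)\partial_x f$ term converts it into $+K_\l\,\partial_u\partial_x f$, which supplies the mixed second-derivative $\partial_x\partial_u$ in $A_\l$. Collecting everything then yields exactly $\int K_\l\,A_\l f\,du=\tilde K_\l A_\l f$. Since the steps are line-for-line identical to the Toda computation, I would simply write ``This follows from \eqref{inth}, as in the proof of Proposition~\ref{p-int-t},'' as was done for the rational case.

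The one genuinely new point, and the place I expect the only real care is needed, is the justification of the boundary terms in the integration by parts, because here the domain of integration in $u$ is all of $\R$ rather than a compact interval, and the restriction $|\Re\l|<\epsilon\mu$ enters precisely to control the decay of $K_\l$ and its derivatives at $u\to\pm\infty$. Since $f\in C^2_c(D)$ has compact support in $u$, the integrand together with all the derivatives appearing vanishes outside a compact set, so every boundary term at $u=\pm\infty$ is trivially zero and the differentiation under the integral sign is justified by dominated convergence on the (compact in $u$) support of $f$. The hypothesis $|\Re\l|<\epsilon\mu$ is what guarantees the eigenfunction $\psi_\l$ itself is finite (so that the statement is meaningful and consistent with the integral representation established above), but for the intertwining identity applied to compactly supported $f$ the convergence is automatic. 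Thus the main obstacle is purely notational bookkeeping of the integration-by-parts terms, with no analytic difficulty beyond the compact support of $f$.
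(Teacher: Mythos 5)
Your proposal is correct and follows essentially the same route as the paper, whose proof of this proposition is simply the one-line remark that it follows from \eqref{inth} as in the proof of Proposition~\ref{p-int-t}; your expansion of the Leibnitz rule, substitution of the intertwining relation, and the two integrations by parts in $u$ reproduce exactly that computation, and your observation that the compact support of $f\in C^2_c(D)$ kills all boundary terms (so that $|\Re\l|<\epsilon\mu$ is only needed for the finiteness of $\psi_\l$, not for the identity on compactly supported $f$) is a correct and slightly more careful reading than the paper makes explicit.
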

\begin{proof}
This follows from \eqref{inth}, as in the proof of Proposition~\ref{p-int-t}.
\end{proof}

Now, let $B$ be a standard one-dimensional Brownian motion and consider the coupled
stochastic differential equations obtained by adding white noise to $\l$ in \eqref{eet}, that is
\be\label{sdeh}
dU=dB+\l dt,\qquad dX=dU +b(X,U) dt.
\ee

\begin{lem} Suppose $\l\in\R$ with $|\l|<\epsilon\mu$ and $\mu\ge 1/2$.
For any initial condition $\nu\in\P(D)$, the stochastic differential equation \eqref{sdeh} has a 
unique strong solution with continuous sample paths in $D$.  It is a diffusion process in $D$ 
with infinitesimal generator $A_\l$ and the martingale problem for $(A_\l,\nu)$ is well-posed.
\end{lem}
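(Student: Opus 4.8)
The plan is to follow the same scheme as in the rational Calogero-Moser lemma. Since $U$ solves its own equation explicitly, $U_t=U_0+B_t+\l t$, the pair $(X,U)$ is determined as soon as one solves the single equation $dX=dB+(\l+b(X,U))\,dt$, where $b$ is smooth on all of $D$. For large $x$ one has $\coth\epsilon x\to 1$ and $\tanh(\epsilon(x+u)/2)\in(-1,1)$, so $b$ stays bounded and $X$ cannot escape to $+\infty$ in finite time; the only possible obstruction to a global solution is the singularity of $b$ as $x\downarrow 0$. Thus the entire problem reduces to showing that $X$ never reaches the boundary $\{x=0\}$.

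For the local theory I would fix $\delta>0$ and work on $D_\delta=\{(x,u)\in D:\ x>\delta\}$, where $(x,u)\mapsto \l+b(x,u)$ and its first derivatives are bounded uniformly in $u$ (because $\coth\epsilon x$ is bounded for $x>\delta$ and $\tanh$ is globally bounded with bounded derivative). Standard results then give a unique strong solution with continuous paths up to the first exit time $\tau_\delta$ of $D_\delta$, and letting $\delta\downarrow 0$ yields a unique strong solution up to $\tau=\lim_\delta\tau_\delta$, the first time $X$ hits $0$. It remains to prove $\tau=+\infty$ almost surely.

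This is the crux of the argument, and the only place the hypothesis $\mu\ge 1/2$ enters. Using the elementary inequalities $\coth\epsilon x\ge 1/(\epsilon x)$ for $x>0$ and $\tanh(\epsilon(x+u)/2)\le 1$, the drift of $X$ is bounded below independently of $u$:
\[
\l+b(x,u)\ \ge\ \l-\mu\epsilon+\frac{\mu}{x}.
\]
I would therefore compare $X$, by the standard one-dimensional comparison theorem, with the diffusion $R$ driven by the same Brownian motion and solving $dR=dB+(\l-\mu\epsilon+\mu/R)\,dt$ with $R_0=X_0$. This is a Bessel-type process whose scale density behaves like $r^{-2\mu}$ near the origin (the additional bounded factor coming from the constant drift does not affect the integrability there); since $\mu\ge 1/2$ the integral $\int_0 r^{-2\mu}\,dr$ diverges, so the origin is inaccessible and $R_t>0$ for all $t\ge 0$ almost surely. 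The pointwise drift bound then gives $X_t\ge R_t>0$ for all $t<\tau$, forcing $\tau=+\infty$.

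Finally, with a unique strong solution in $D$ for every fixed starting point, the remaining assertions follow exactly as in the rational case: $(X,U)$ is a diffusion in $D$ with generator $A_\l$, and pathwise uniqueness translates, via the usual correspondence between stochastic differential equations and martingale problems, into well-posedness of the martingale problem for $(A_\l,\nu)$ for every $\nu\in\P(D)$. The main obstacle is the boundary analysis of the third step; everything else is routine localization together with the standard passage from strong solutions to well-posed martingale problems.
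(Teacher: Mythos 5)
Your proof is correct and follows essentially the same route as the paper: localization on $\{x>\delta\}$ for the local existence and uniqueness, then a one-dimensional comparison to show $X$ never reaches $0$, with $\mu\ge 1/2$ entering exactly through the inaccessibility of the origin for the comparison diffusion. The only (immaterial) difference is the choice of comparison drift: you use the further lower bound $\mu/r-\mu\epsilon$ via $\coth(\epsilon x)\ge 1/(\epsilon x)$, whereas the paper compares directly with $dR=dU+\mu\epsilon(\coth(\epsilon R)-1)\,dt$; both have the same boundary behaviour at $0$.
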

\begin{proof}
The function $(x,u)\mapsto \left(\l+b(x,u),\ \l\right)$ is uniformly Lipschitz and bounded 
on $D_\delta=\{(x,u)\in D:\ x>\delta\}$ for any $\delta>0$, so by
standard arguments, for any fixed initial condition $(x,u)\in D$,
the SDE \eqref{sdeh} has a unique strong solution with continuous sample 
paths up until the first exit time $\tau$ from the domain $D$.  We are therefore 
required to show that $\tau=+\infty$ almost surely or equivalently, that $X_t$
almost surely never vanishes. We show this by a comparison argument, using
the fact that on $D$ we have
$$b(x,u)>\mu\epsilon(\coth\epsilon x -1).$$
Now, $$dX=dU+b(X,U)dt,$$  
and it is straightforward to see that the one-dimensional SDE
$$dR= dU+\mu\epsilon(\coth(\epsilon R)-1) dt$$
has a unique strong solution with continuous sample paths in $(0,\infty)$ for any $R_0=r>0$;
since $\mu\ge 1/2$, by the usual boundary classification $0$ is an entrance boundary for this diffusion.
Thus, if $(X_0,U_0)=(x,u)\in D$ and $R_0=x-u$, then $X_t\ge R_t>0$ 
almost surely for all $t\ge 0$, as required, proving the first claim.
The second claim follows.
\end{proof}

Combining this with the intertwining relation \eqref{int-h}, we obtain:

\begin{thm}\label{cgh-pitman}
Suppose $\l\in\R$ with $|\l|<\epsilon\mu$ and $\mu> 1/2$.
Let $\rho\in\P((0,\infty))$ and $\nu= \rho(dx) \nu_x(du)\in\P(D)$,
where $\nu_x(du)= \psi_\l(x)^{-1}K_\l(x,u)du$.
Let $(X,U)$ be a diffusion process in $D$ with initial condition $\nu$ and infinitesimal generator $A_\l$.
Then $X$ is a diffusion process in $(0,\infty)$ with infinitesimal generator
$$L_\l=\psi_\l(x)^{-1} H_\l \psi_\l(x)=\frac12\partial_x^2+\partial_x\ln \psi_\l(x) \cdot \partial_x.$$
Moreover, for each $t\ge 0$ and $g\in B(\R)$, 
$$E[g(U_t)|\ X_s,\ 0\le s\le t]=\int_{-\infty}^{\infty} g(u) \nu_{X_t}(du),$$
almost surely.
\end{thm}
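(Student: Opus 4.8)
The plan is to mirror the proof of Theorem~\ref{cg-pitman}, substituting the present kernel and eigenfunction and again invoking the Markov-functions result, Theorem~\ref{mf}, together with the intertwining relation \eqref{int-h}. First I would make $D=(0,\infty)\times\R$ and $(0,\infty)$ into complete separable metric spaces by the smooth change of variables $(x,u)\mapsto(\ln x,u)$ and $x\mapsto\ln x$, which leaves the topologies and the classes $B$, $C_b$, $\P$, $C^2_c$ unchanged. Taking $\gamma(x,u)=x$ and the kernel
$$\L f(x)=\int_{-\infty}^\infty\nu_x(du)\,f(x,u),$$
one checks at once that $\L(g\circ\gamma)=g$ for $g\in B((0,\infty))$, and that the intertwining
$$L_\l\L f=\L A_\l f,\qquad f\in\D(A_\l),$$
is exactly \eqref{int-h} divided through by $\psi_\l$, since $\L f=\psi_\l^{-1}\tilde K_\l f$ and $L_\l=\psi_\l^{-1}H_\l\psi_\l$. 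As $\D(A_\l)=C^2_c(D)$ is closed under multiplication, separates points and is convergence determining, the theorem will follow from Theorem~\ref{mf} once the martingale problem for $L_\l$ is shown to be well-posed.

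The one genuinely new point, compared with the rational case, is the boundary analysis of the one-dimensional diffusion with generator $L_\l=\tfrac12\partial_x^2+b_\l(x)\partial_x$, $b_\l=\partial_x\ln\psi_\l$. Here I would read off the two relevant asymptotics from the explicit representation \eqref{h-form} of $\psi_\l$ in terms of the associated Legendre function $P^{1/2-\mu}_{\l/\epsilon-1/2}(\cosh\epsilon x)$, using $\psi_\l=\psi_{-\l}$ to reduce to $\l\ge0$. As $x\to0^+$ one has $\cosh\epsilon x\to1^+$, and the behaviour of $P$ near the point $1$ gives $\psi_\l(x)\sim c\,x^\mu$, so that $b_\l(x)\sim\mu/x$ (in agreement with the explicit $\psi_0(x)\propto(\sinh\epsilon x)^\mu$, for which $b_0=\mu\epsilon\coth\epsilon x$). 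As $x\to+\infty$ the large-argument asymptotics of $P$ show that $b_\l(x)$ tends to a finite limit (equal to $\l$ when $\l\ne0$ and to $\mu\epsilon$ when $\l=0$); in particular the drift stays bounded near $+\infty$.

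To control $b_\l$ on the interior I would differentiate the eigenvalue equation $H_\l\psi_\l=0$: writing $\psi_\l=e^\phi$ yields the Riccati identity
$$\partial_x^2\ln\psi_\l=\frac{\epsilon^2\mu(\mu-1)}{\sinh^2\epsilon x}+\l^2-b_\l(x)^2,$$
from which $b_\l$ is seen to be uniformly Lipschitz and bounded on every half-line $(a,\infty)$, $a>0$. Combining this with the two endpoint asymptotics and the Feller boundary classification, the hypothesis $\mu>1/2$ places us in the regime where the near-$0$ drift $\sim\mu/x$ makes $0$ an entrance boundary, while boundedness of the drift at $+\infty$ makes $+\infty$ a natural boundary; hence the martingale problem for $(L_\l,\rho)$ with $\D(L_\l)=C^2_c((0,\infty))$ is well-posed. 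Exactly as in Theorem~\ref{cg-pitman}, It\^o's lemma and the intertwining then allow the domain to be enlarged to $\D(L_\l)=\L(\D(A_\l))\cup C^2_c((0,\infty))$ without spoiling well-posedness, and Theorem~\ref{mf} finishes the argument.

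I expect the boundary analysis at $0$ to be the crux: one must extract the precise $x^\mu$ behaviour (equivalently, the $\mu/x$ drift) from the associated Legendre function uniformly over the admissible range $|\l|<\epsilon\mu$, and confirm that $\mu>1/2$ is what is needed for $0$ to be an entrance boundary, so that no boundary condition has to be imposed and the martingale problem remains well-posed.
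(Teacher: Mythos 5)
Your proposal is correct and follows essentially the same route as the paper: the same identification of $D$ and $(0,\infty)$ with $\R^2$ and $\R$, the same kernel $\L$ and intertwining, and the same boundary analysis reading off $b_\l(x)\sim\mu/x$ at $0^+$ and a finite drift limit at $+\infty$ from the Legendre-function representation \eqref{h-form} (the paper does this via the derivative recurrence for $P^a_b$ rather than the $\psi_\l\sim c\,x^\mu$ asymptotic, and your Riccati identity for Lipschitz control is the same device the paper uses in the rational case). No gaps.
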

\begin{proof}  This follows from the intertwining relation \eqref{inth} using Theorem~\ref{mf}.  
As before, we identify $D$ with $\R^2$ via the one-to-one mapping $(x,u)\mapsto (\ln x,u)$ and 
thus regard $D$, equipped with the metric induced from the Euclidean metric on $\R^2$, as a complete, 
separable, locally compact metric space.  Similarly, we identify $(0,\infty)$ with $\R$ via the one-to-one 
mapping $x\mapsto \ln x$ and regard $(0,\infty)$, equipped with the metric induced from the Euclidean 
metric on $\R$, as a complete, separable metric space. 

The map $\gamma:D\to (0,\infty)$ defined by $\gamma(x,u)=x$ is continuous and the Markov transition
kernel $\L $ from $(0,\infty)$ to $D$ defined by
$$\L f(x)= \int_{-\infty}^\infty \nu_x(du) f(x,u),\qquad f\in B(D)$$
satisfies $\L (g\circ\gamma)=g$ for $g\in B((0,\infty))$.
Moreover, by  \eqref{int-h}, 
\be\label{int-cgh-L}
L_\l \L f=\L A_\l f,\qquad f\in\D(A_\l).
\ee
Now, $\D(A_\l)=C^2_c(D)$ is closed under multiplication, separates points and is convergence 
determining.  Thus, all that remains to be shown is that the martingale problem for $(L_\l,\rho)$, for some
$\D(L_\l)\supset \L (\D(A_\l))$, is well-posed.  

By \eqref{h-form} and the relation
$$(z^2-1) \frac{d}{dz} P^a_b(z)=bzP^a_b(z)-(a+b)P^a_{b-1}(z),$$
the drift $b_\l(x)=\partial_x\ln\psi_\l(x)$ is given by
$$b_\l(x)=\l \coth\epsilon x +\frac{\epsilon\mu-\l}{\sinh\epsilon x } 
\left[ P^{\tfrac12-\mu}_{\tfrac\l\epsilon-\tfrac32}(\cosh\epsilon x )\Big/
P^{\tfrac12-\mu}_{\tfrac\l\epsilon-\tfrac12}(\cosh\epsilon x )\right] .$$
As $x\to 0^+$, 
$$P^{\tfrac12-\mu}_{\tfrac\l\epsilon-\tfrac32}(\cosh\epsilon x ) \Big/
P^{\tfrac12-\mu}_{\tfrac\l\epsilon-\tfrac12}(\cosh\epsilon x ) \to 1.$$
Now $\mu>1/2$, so this implies that $b_\l(x)>1/2 x$ for $x$ sufficiently small,
which classifies $0$ as an entrance boundary.  On the other hand, as $x\to+\infty$,
the second term vanishes and $b_\l(x)\to\l$, which shows that $+\infty$ is a
natural boundary.  The relevant asymptotics can be found, for example, 
in \cite[\S 14.8.7, \S 14.8(iii)]{dlmf}.  Thus, as $b_\l$ is locally Lipschitz, 
 the martingale problem for $(L_\l,\rho)$ with $\D(L_\l)=C^2_c((0,\infty))$ is well-posed.
By It\^o's lemma and the intertwining relation \eqref{int-cg-L}, it follows that the martingale problem for 
$(L_\l,\rho)$ with $\D(L_\l)=\L (\D(A_\l))\cup C^2_c((0,\infty))$ is also well-posed, as required.
\end{proof}

To summarise, for any given value of the constant of motion $\l=\dot u\in\R$ with $|\l|\le\mu\epsilon$,
the classical flow in $D$ evolves according to the evolution equations
$$\dot u =  \l,\qquad \dot x = \dot u + b(x,u).$$
If we add noise to the constant of motion $\l$, 
then the evolution is described by the stochastic B\"acklund transformation
$$dU=dB+\l dt,\qquad dX=dU + b(X,U) dt$$
and, for appropriate (random) initial conditions, $U$ evolves as a Brownian
motion with drift $\l$ and $X$ evolves as a diffusion process in $(0,\infty)$ 
with infinitesimal generator $L_\l$.

As in the previous examples, we can let $\mu\to\infty$ to study the semi-classical 
limit and the result is analogous.  As before, if $\mu=1$ and $|\l|<\epsilon$, and
$u_\l(x)$ denotes the unique solution to the critical point equation $\partial_u\ln K_\l=0$, then
$$ \partial_x\left[ \ln K_\l(x,u_\l(x))\right] =  \left[ \partial_x\ln K_\l\right] (x,u_\l(x)).$$

It is natural to ask what happens to the statement of Theorem~\ref{cgh-pitman}
when $\l\to\mu\epsilon$.  In this limit, $b_\l(x)\to\mu\epsilon\coth\epsilon x$ and
$$\Gamma(\mu-\lambda/\epsilon)^{-1}\psi_\l(x)\to
\frac{2^{\mu+2}\Gamma(2\mu)}{\sqrt{\pi}\epsilon\Gamma(\mu)\Gamma(\mu+1/2)}(\sinh\epsilon x)^\mu
=: \tilde\psi_{\mu\epsilon}(x).$$
Furthermore, since $u_\l(x)\to+\infty$, it is easy to see that the measure $\nu_x$ concentrates at $+\infty$.
Now, when $\l\to\mu\epsilon$ and $u\to+\infty$, $\l+b(x,u)\to \mu\epsilon\coth\epsilon x$.
The B\"acklund transformation simplifies: if
$$\dot x=\mu\epsilon\coth\epsilon x$$
then $x$ evolves according to the hyperbolic Calogero-Moser flow with the constant to
motion $\l=\mu\epsilon$.  The statement of Theorem~\ref{cgh-pitman} carries over trivially: 
if $X$ evolves according to the SDE
$$dX=dB+\mu\epsilon\coth(\epsilon X) dt$$
then $X$ is a diffusion process on $(0,\infty)$ with infinitesimal generator
$$L_{\mu\epsilon}=\tilde\psi_{\mu\epsilon} (x)^{-1} H_{\mu\epsilon} \tilde\psi_{\mu\epsilon}(x)=
\frac12\partial_x^2+\mu\epsilon\coth\epsilon x \cdot \partial_x.$$
Similar remarks apply when $\l\to-\mu\epsilon$, and in fact the limiting statements are the same.
When $\epsilon\to 0$, this reduces to the example discussed in 
the introduction, and now we can see from the fundamental restriction $|\l|<\epsilon\mu$ that in fact 
we can only hope for the above structure to remain intact in this limit when $\l=0$,
as indeed it does with the evolution of the $x$-coordinate in \eqref{eeh} becoming
autonomous and reducing to $\dot x=\mu/x$, and the analogue of 
Theorem~\ref{cgh-pitman} carrying over trivially.

\section{The KPZ equation and semi-infinite Toda chain}

As remarked in the introduction, most of the above constructions
extend naturally to higher rank systems.  For the $n$-particle Toda chain, this has been 
developed in the papers~\cite{noc,noc-toda}.  The construction given in \cite{noc} is 
related to the geometric RSK correspondence.  In~\cite{ow} it was extended to a 
semi-infinite setting and related to the Kardar-Parisi-Zhang (or stochastic heat) equation.  
In this context it can be represented formally as a semi-infinite system of coupled stochastic 
partial differential equations, the first of which is the stochastic heat equation.  In the language 
of the present paper, the construction given in~\cite{ow} is a stochastic B\"acklund transformation 
and should be related (in a way that has yet to be fully understood) to a semi-infinite version of 
the quantum Toda chain.  See also~\cite{ch,mqr} for further related work in this direction.

With this picture in mind, it is natural to expect the construction given in \cite{ow}, without 
noise, to be related to the semi-infinite classical Toda chain.  This is indeed the case, as we will now 
explain directly.  The conclusion is that the fixed-time solution to the KPZ equation, with `narrow
wedge' initial condition, can be viewed as the trajectory of the first particle in a stochastic 
perturbation of a particular solution to the semi-infinite Toda chain.

The stochastic heat equation can be written
formally as $$u_t = \frac12 u_{xx} + \xi u$$ where $\xi(t,x)$ is space-time white noise.  It is related
to the KPZ equation $$h_t= \frac12 h_{xx} + \frac12 (h_x)^2+\xi $$ via the Cole-Hopf transformation
$h=\log u$.  The extension given in~\cite{ow} starts with a solution $u(t,x,y)$ to the stochastic
heat equation with delta initial condition $u(0,x,y)=\delta(x-y)$ and defines a sequence of
`$\tau$-functions' $\tau_n$ which can be expressed formally as the bi-Wronskians
$$\tau_n=\det[\partial_x^{i-1}\partial_y^{j-1} u]_{i,j=1,\ldots,n}.$$
Their evolution can be described, again formally, by the coupled equations
$$\partial_t a_n = \frac12 \partial_x^2 a_n + \partial_x[a_n \partial_x h_n]$$
where $a_n=\tau_{n-1}\tau_{n+1}/\tau_n^2$ and $h_n=\log(\tau_n/\tau_{n-1})$
with the convention $\tau_0=1$.  Moreover, formally it can be seen that the $\tau_n$
are $\tau$-functions for the 2d Toda chain, that is, $(\ln\tau_n)_{xy}=a_n$.

If we switch off the noise by setting $\xi=0$, then $u$ is given by the heat kernel
$$u(t,x,y)=\frac1{\sqrt{2\pi t}} e^{-(x-y)^2/2t}$$
and
$$\tau_n=t^{-n(n-1)/2} \left(\prod_{j=1}^{n-1} j!\right) u^n.$$
Note that $a_n=\tau_{n-1}\tau_{n+1}/\tau_n^2=n/t$ and 
$$h_{n+1}=-(x-y)^2/2t - \ln\big[ \sqrt{2\pi t}\ \frac{t^n}{ n!}\big].$$
These $\tau_n$ satisfy the 2d Toda equations $(\ln\tau_n)_{xy}=a_n$ as before,
but now it also holds that $(\ln\tau_n)_{xx}=-a_n$ or, equivalently,
$$(h_n)_{xx} = e^{h_n-h_{n-1}}-e^{h_{n+1}-h_n}$$
(with $h_0\equiv+\infty$) which are the equations of motion of the
semi-infinite Toda chain.

\appendix

\section{Markov functions}

The theory of Markov functions is concerned with the question: when does a function of a Markov process 
inherit the Markov property?  The simplest case is when there is symmetry in the problem, for example, the
norm of Brownian motion in $\R^n$ has the Markov property, for any initial condition, because the heat kernel 
in $\R^n$ is invariant under rotations.  A more general formulation of this idea is the well-known {\em Dynkin 
criterion}~\cite{dynkin}.  There is another, more subtle, criterion which has been proved at various levels 
of generality by, for example, Kemeny and Snell~\cite{ks}, Rogers and Pitman~\cite{rp} and Kurtz~\cite{kurtz}.  
It can be interpreted as a time-reversal of Dynkin's criterion~\cite{kelly} and provides sufficient conditions
for a function of a Markov process to have the Markov property, but only for very particular initial conditions.
For our purposes, the martingale problem formulation of Kurtz~\cite{kurtz} is best suited, as it is quite flexible 
and formulated in terms of infinitesimal generators.

Let $E$ be a complete, separable metric space. Let $A:\D(A)\subset B(E)\to B(E)$ and $\nu\in \P(E)$.
A progressively measurable $E$-valued process $X=(X_t,\ t\ge 0)$ is a solution to the {\em martingale
problem} for $(A,\nu)$ if $X_0$ is distributed according to $\nu$ and there exists a filtration $\F_t$ such that
$$f(X_t)-\int_0^t Af(X_s) ds$$
is a $\F_t$-martingale, for all $f\in\D(A)$.  The martingale problem for $(A,\nu)$ is {\em well-posed} is there exists a
solution $X$ which is unique in the sense that any two solutions have the same finite-dimensional distributions.

The following is a special case of Corollary 3.5 (see also Theorems 2.6, 2.9 and the remark at the top 
of page 5) in the paper \cite{kurtz}.

\begin{thm}[Kurtz, 1998]\label{mf} Assume that $E$ is locally compact, that $A:\D(A)\subset C_b(E) \to C_b(E)$, 
and that $\D(A)$ is closed under multiplication, separates points and is convergence determining.
Let $F$ be another complete, separable metric space, $\gamma:E\to F$ continuous and
$\L (y,dx)$ a Markov transition kernel from $F$ to $E$ such that $\L (g\circ\gamma)=g$ for all $g\in B(F)$,
where $\L f(x)=\int_E f(x) \L (y,dx)$ for $f\in B(E)$.
Let $B:\D(B)\subset B(F)\to B(F)$, where $\L (\D(A))\subset\D(B)$, and suppose
$$B\L f=\L Af,\qquad f\in\D(A).$$
Let $\mu\in\P(F)$ and set $\nu=\int_F \mu(dy) \L (y,dx)\in\P(E)$.  Suppose that the martingale problems
for $(A,\nu)$ and $(B,\mu)$ are well-posed, and that $X$ is a solution to the martingale problem for
$(A,\nu)$.  Then $Y=\gamma\circ X$ is a Markov process and a solution to the martingale problem for 
$(B,\mu)$.  Furthermore, for each $t\ge 0$ and $g\in B(F)$ we have, almost surely, 
$$E[g(X_t)|\ Y_s,\ 0\le s\le t]=\int_E g(x) \L (Y_t,dx).$$
\end{thm}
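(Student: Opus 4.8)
The plan is to produce a witness process with the claimed properties and then transfer them to the given $X$ using the assumed well-posedness of the martingale problem for $(A,\nu)$. The starting structural point is that the identity $\L(g\circ\gamma)=g$ forces $\gamma_*\L(y,\cdot)=\delta_y$, so $\L(y,\cdot)$ is supported on the fibre $\gamma^{-1}(y)$ and $\gamma_*\nu=\mu$. Consequently, any random variable whose conditional law is $\L(Y_t,\cdot)$ given a $\sigma$-field containing $\sigma(Y_t)$ automatically satisfies $\gamma(\cdot)=Y_t$, so the constraint $\gamma\circ X=Y$ is already built into the filtering identity, and the initial condition $\nu=\mu\L$ is consistent with $Y_0\sim\mu$.

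First I would construct the witness. By well-posedness of the martingale problem for $(B,\mu)$, let $\hat Y$ be its solution, which is automatically a Markov process, and on an enlarged space build a process $\hat X$ with $\hat X_0\sim\nu$ whose conditional law given the observations is $\L(\hat Y_t,\cdot)$ at each time $t$; then $\gamma\circ\hat X=\hat Y$ by the structural point. The central computation is to check that $\hat X$ solves the martingale problem for $(A,\nu)$. For $f\in\D(A)$ we have $\L f\in\D(B)$, and applying the martingale problem for $(B,\mu)$ to the test function $\L f$ together with the intertwining $B\L f=\L Af$ gives, with $\mathcal G_t=\sigma(\hat Y_s:s\le t)$,
$$E\Big[\L f(\hat Y_{t+s})-\L f(\hat Y_t)-\int_t^{t+s}\L Af(\hat Y_r)\,dr\ \Big|\ \mathcal G_t\Big]=0.$$
Since $\L f(\hat Y_r)$ and $\L Af(\hat Y_r)$ are, by construction, the conditional means of $f(\hat X_r)$ and $Af(\hat X_r)$ given the observations, this is exactly the martingale property of $f(\hat X_t)-\int_0^t Af(\hat X_s)\,ds$ relative to the observation filtration; the delicate point is to upgrade it to the filtration generated by $\hat X$ itself.

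Finally I would close by uniqueness. Both $X$ and $\hat X$ solve the martingale problem for $(A,\nu)$ from the same initial law, so well-posedness yields identical finite-dimensional distributions. The assertion that $\gamma\circ X$ solves the martingale problem for $(B,\mu)$ and the filtering identity $E[g(X_t)\mid\mathcal G_t]=\L g(Y_t)$ with $\mathcal G_t=\sigma(Y_s:s\le t)$ are both determined by these finite-dimensional distributions, so they transfer from $\hat X$ to $X$; the Markov property of $Y$ then follows from well-posedness of $(B,\mu)$. The hypotheses that $\D(A)$ is closed under multiplication, separates points and is convergence determining enter precisely here: they make $\D(A)$ measure-determining, so matching the functionals $f\mapsto\int f\,d\pi$ over $f\in\D(A)$ pins down the whole conditional law as $\L(Y_t,\cdot)$ and extends the identity from $\D(A)$ to all $g\in B(E)$. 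I expect the main obstacle to be the construction in the second step: arranging the coupled process $\hat X$ to have both the correct one-time conditional marginals and enough conditional independence from its own past that the $A$-martingale property survives passage from the observation filtration to the filtration generated by $\hat X$. This measurability and regularity issue is the technical core of Kurtz's theorem.
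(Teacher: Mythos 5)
This statement is not proved in the paper at all: it is quoted verbatim as a special case of Corollary~3.5 of Kurtz's 1998 paper \emph{Martingale problems for conditional distributions of Markov processes}, so there is no in-paper proof to compare your attempt against. Judged on its own terms, your outline does track the strategy of Kurtz's actual argument: start from the (well-posed) solution $\hat Y$ of the $(B,\mu)$ problem, build a coupled process $\hat X$ over it whose conditional law given the observations is $\L(\hat Y_t,\cdot)$, use the intertwining $B\L f=\L Af$ to verify that $\hat X$ solves the $(A,\nu)$ martingale problem, and then transfer everything to the given $X$ by uniqueness, with the separating/multiplicative/convergence-determining hypotheses on $\D(A)$ used to pin down conditional laws from their values on $\D(A)$. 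Your preliminary observations ($\gamma_*\L(y,\cdot)=\delta_y$, $\gamma_*\nu=\mu$) are correct and are indeed how the constraint $\gamma\circ\hat X=\hat Y$ gets built in.

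The difficulty is that the step you flag as ``the technical core'' is not a loose end but essentially the whole theorem. A martingale identity relative to the observation filtration $\mathcal G_t=\sigma(\hat Y_s:s\le t)$ does not make $\hat X$ a solution of the martingale problem, since $\hat X$ is not $\mathcal G_t$-adapted; to upgrade to a filtration containing $\sigma(\hat X_s:s\le t)$ one must construct $\hat X$ so that, given the whole observation $\sigma$-field, its values at distinct times are conditionally independent with marginals $\L(\hat Y_t,\cdot)$, and then run the tower-property computation through that conditional independence. This is the content of Kurtz's Markov mapping theorem and of his uniqueness theory for the \emph{filtered} martingale problem (Theorems~2.6 and~2.9 there, which is why the paper cites them alongside Corollary~3.5), and it is also exactly where the hypotheses on $\D(A)$ are consumed. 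So your proposal is a faithful road map of the known proof rather than a proof: nothing in it is wrong, but the one step you defer is the one that requires a genuine idea.
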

We remark that, under the hypotheses of the above theorem, $X$ is a Markov process and the forward equation
$$\nu_tf=\nu f+\int_0^t \nu_s Af ds,\qquad f\in \D(A)$$
has a unique continuous solution in $\P(D)$;  also the assumption of uniqueness for the martingale problem
for $(B,\mu)$ is not necessary, as it is implied by the other hypotheses; we refer the reader to~\cite{kurtz} for more details.

\end{document}